\newtheorem{theorem}{Theorem}
\newtheorem{proposition}{Proposition}
\newtheorem{lemma}{Lemma}
\newtheorem{fact}{Fact}
\newcommand{\R}{\mathbb R}
\newcommand{\limto}{\rightarrow}
\title{
Local Strong Convexity of Source Localization and Error Bound for Target Tracking under Time-of-Arrival Measurements
}
\author{Yuen-Man Pun and Anthony Man-Cho So,~\IEEEmembership{Senior Member,~IEEE} 
\thanks{Y.-M. Pun and A. M.-C. So are with Department of Systems Engineering and Engineering Management, The Chinese University of Hong Kong, Hong Kong. (email: ympun@se.cuhk.edu.hk; manchoso@se.cuhk.edu.hk)}
\thanks{A preliminary version of this paper~\cite{PS20} has appeared in the Proceedings of the 59th IEEE Conference on Decision and Control (CDC 2020). This work is supported in part by CUHK Research Sustainability of Major RGC Funding Schemes project 3133236.}
}
\begin{document}

\maketitle
\thispagestyle{empty}
\pagestyle{empty}

\begin{abstract}
In this paper, we consider a time-varying optimization approach to the problem of tracking a moving target using noisy time-of-arrival (TOA) measurements. Specifically, we formulate the problem as that of sequential TOA-based source localization and apply online gradient descent (OGD) to it to generate the position estimates of the target. To analyze the tracking performance of OGD, we first revisit the classic least-squares formulation of the (static) TOA-based source localization problem and elucidate its estimation and geometric properties. In particular, under standard assumptions on the TOA measurement model, we establish a bound on the distance between an optimal solution to the least-squares formulation and the true target position. Using this bound, we show that the loss function in the formulation, albeit non-convex in general, is locally strongly convex at its global minima. To the best of our knowledge, these results are new and can be of independent interest. By combining them with existing techniques from online strongly convex optimization, we then establish the first non-trivial bound on the cumulative target tracking error of OGD. Our numerical results corroborate the theoretical findings and show that OGD can effectively track the target at different noise levels.
\end{abstract}

\begin{IEEEkeywords} 
source localization, target tracking, time of arrival (TOA), 
online gradient descent, tracking error bound
\end{IEEEkeywords}

\section{Introduction} \label{sec:intro}
Target tracking~\cite{LWH+02,SNP16} is a key enabling technology in many applications of multi-agent systems and wireless sensor networks, such as motion planning~\cite{CWC+04,DSH09} and surveillance~\cite{PJ05,PDS14}. In one of its basic forms, the tracking problem aims to maintain position estimates of a moving, signal-emitting target over time using noisy measurements of the emitted signal collected by stationary sensors. Such a sequential localization formulation has been extensively studied in the control and signal processing communities, and various approaches for tackling it have been proposed. When a model on the target dynamics and noise statistics is available, a classic approach is to employ Kalman filtering techniques to perform the tracking; see, e.g.,~\cite{LEH06,TFLC09,WLM11,YRLS14} and the references therein. In recent years, however, there have been increasing efforts in developing tracking techniques that require only minimal assumptions on the target trajectory and/or noise distribution. One approach is to view the sequential localization formulation through the lens of time-varying optimization~\cite{DSBM20,SDP+20}. Specifically, at each time step, the position estimate of the target is given by a minimizer of a loss function that depends on the noisy signal measurements collected at that time step. However, since the time interval between successive measurements is often very short and the sensors have limited computational power, it is impractical to solve the loss minimization problem at each time step exactly. This motivates the use of online optimization techniques to tackle the target tracking problem. To evaluate the performance of an online method, various metrics are available; see~\cite{SDP+20}. These metrics differ in how they measure the discrepancy between the solutions generated by the method at different time steps and the optimal solutions at the corresponding time steps. When the loss function is convex at every time step, it has been shown that many online methods enjoy strong performance guarantees under different metrics; see, e.g.,~\cite{Zink03,HW15,JRSS15,MSJ+16,SMK+16,SJ17,BSR18} and the references therein.

Although the results just mentioned cover a wide variety of target tracking scenarios, they do not apply to those where the loss function of interest is \emph{non-convex}. One such scenario is \emph{time-of-arrival (TOA)-based tracking}, in which sensors collect TOA measurements of the target signal and the tracking is achieved by minimizing a non-convex least-squares loss function associated with the measurements collected at each time step~\cite{ZXY+10,XDD13,LTS20}. In this scenario, the tracking problem can be viewed as a sequential version of the well-studied TOA-based source localization problem; see, e.g.,~\cite{CMS04,SY07,BSL08,BTC08,XDD11a,JSZ+13,So19}.
%
%
As far as we know, the TOA-based tracking problem has barely been investigated from the time-varying or online optimization perspective in the literature. Recently, there have been some works that study time-varying optimization problems with general non-convex loss functions. However, the results are not entirely satisfactory when specialized to the TOA-based tracking problem. For instance, the work~\cite{LTS20} proposes an online Newton's method (ONM) and establishes a bound on its \emph{dynamic regret} (i.e., the difference between the cumulative loss incurred by the sequence of solutions generated by the method and that incurred by the sequence of optimal solutions; see~\cite{SDP+20}) by assuming, among other things, that the Hessian of the loss function at each time step satisfies a non-degeneracy condition. It also demonstrates the numerical performance of ONM on the TOA-based tracking problem. Nevertheless, since ONM needs to compute the inverse of the Hessian of the loss function at each time step, it can be computationally expensive. In addition, the work does not shed any light on whether the TOA-based tracking problem satisfies the assumptions underlying the dynamic regret analysis of ONM. As such, the theoretical performance of ONM for TOA-based tracking remains unclear. On the other hand, the work~\cite{HMMR20} develops a dual averaging method and obtains a bound on its dynamic regret under relatively mild assumptions on the loss functions. However, the method is mainly of theoretical interest, as it needs to compute a distribution over the feasible solutions and sample a solution from this distribution at each time step, and neither of these is straightforward to implement for the TOA-based tracking problem.

Motivated by the above discussion, we are interested in developing a low-complexity online method for TOA-based tracking and establishing theoretical guarantee on its performance. One method that naturally suggests itself is online gradient descent (OGD). The method only needs to perform a single gradient descent update at each time step, thus making it well-suited for the target tracking task. However, there has been no performance analysis of OGD for our problem setting so far. Not surprisingly, a major difficulty is that the least-squares loss function associated with the TOA measurements is non-convex. The main contribution of this work is the development of the first non-trivial performance bound for OGD when it is applied to the TOA-based tracking problem. The performance metric we adopt is the \emph{cumulative target tracking error} (CTTE), which is defined as the sum of the distances between the estimated target position and the \emph{true target position} at different time steps. Our bound makes explicit the dependence of the CTTE of OGD on the path length of the target trajectory and the noise power of the TOA measurements. It is important to note that there is a subtle yet fundamental difference in nature between the CTTE metric and most other metrics used in the time-varying or online optimization literature. The former measures the performance relative to the \emph{true values of the parameter} we wish to estimate (viz. the true positions of the target at different time steps), while the latter (such as the dynamic regret or the usual tracking error) measure the performance relative to the \emph{optimal solutions} to the loss minimization problems at different time steps. In the context of the TOA-based tracking problem, it is clear that the CTTE defined above is a more relevant performance metric, as ultimately we are interested in how well the online method tracks the true target positions rather than the optimal solutions to the time-varying loss minimization problem. Nevertheless, the use of the true target positions in the definition of CTTE makes it a more challenging metric to analyze.

To establish the said CTTE bound, we proceed in two steps. First, we revisit the classic least-squares formulation of the (static) TOA-based source localization problem and elucidate its estimation and geometric properties. Specifically, under standard assumptions on the TOA measurement model, we establish a bound on the estimation error of any least-squares estimate of the true target position and use this bound to show that the loss function, albeit non-convex in general, is locally strongly convex at its global minima. Moreover, we give an explicit estimate of the size of the strong convexity region. We remark that similar results have previously been established for a \emph{time-difference-of-arrival} (TDOA)-based least-squares loss function~\cite{LPS17}. However, to the best of our knowledge, our results for the TOA-based least-squares loss function are new and can be of independent interest. In particular, it provides further theoretical justification for the good empirical performance of gradient-based schemes observed in~\cite{BTC08} when solving the TOA-based source localization problem. Second, we extend our local strong convexity result from the static localization setting to the dynamic target tracking setting. Specifically, we show that as long as the aforementioned assumptions on the TOA measurement model are satisfied and the distance between the true positions of the target at consecutive time steps is sufficiently small, the position estimate of the target at the current time step will lie in the strong convexity region of the loss function at the next time step. This allows us to utilize techniques from online strongly convex optimization to establish the advertised CTTE bound for OGD. 

The notation in this paper is mostly standard. We use $\|\cdot\|_1$ and $\|\cdot\|$ to denote the $\ell_1$-norm and Euclidean norm, respectively. Given a vector $\bar{\bm x}\in\R^n$ and a scalar $r>0$, we use $B(\bar{\bm x},r) := \{\bm{x}\in\R^n: \|\bm{x}-\bar{\bm x}\| \le r\}$ to denote the closed Euclidean ball with center $\bar{\bm x}$ and radius $r$. Given a symmetric matrix $\bm{A}$, we use $\lambda_{\min}(\bm{A})$ to denote its smallest eigenvalue and $\bm{A}\succ\bm{0}$ to indicate that it is positive definite.

The rest of the paper is organized as follows. In Section~\ref{sec:formulation}, we present a time-varying optimization formulation of the TOA-based tracking problem and describe how it can be tackled by OGD. In Section~\ref{sec:str_cvx}, we study the estimation error and local strong convexity property of the static TOA-based source localization problem. Using these results, we establish our bound on the CTTE of OGD for the TOA-based tracking problem in Section~\ref{sec:toa-reg}. In Section~\ref{sec:sim}, we present numerical results to demonstrate the efficacy of OGD for TOA-based tracking and illustrate our theoretical findings. We then end with some closing remarks in Section~\ref{sec:concl}.

\section{Problem Formulation and Preliminaries}\label{sec:formulation}
We begin by describing the setup for TOA-based tracking. Let $\bm{x}_t^{\star}\in\mathbb{R}^n$ be the unknown true position of the moving target at time $t$, where $t=1,\ldots,T$ and $T$ is the time horizon of interest. Furthermore, let $\bm{a}_i\in\mathbb{R}^n$ ($i=1,\ldots,m$) be the known position of the $i$th sensor and suppose that the vectors $\{\bm{a}_i-\bm{a}_1\}_{i=2}^m$ span $\mathbb{R}^n$ (in particular, we have $m \ge n+1$). We consider the following model for TOA-based range measurements:
\begin{equation} \label{eq:toa-model}
r_i^t = \|\bm{x}_t^{\star} - \bm{a}_i\| + w_i^t, \quad i=1,\ldots,m; \, t=1,\ldots,T.
\end{equation}
Here, $w_i^t$ is the measurement noise and $r_i^t$ is the noisy TOA-based range measurement between the target and the $i$th sensor at time $t$. We assume that $w_i^t$ is a random variable with mean zero, variance bounded above by $\sigma_t^2$ and is independent of the noise at other sensors and at other time steps. We also assume that $|w_i^t| \ll \|\bm{x}_t^{\star} - \bm{a}_i\|$ for $i=1,\ldots,m$ and $t=1,\ldots,T$. It is worth noting that similar assumptions have appeared in the localization literature; see, e.g.,~\cite{WSL16}. 

To estimate the target position at time $t$, a natural approach is to consider the following non-convex least-squares formulation:
\begin{equation} \label{eq:loss}
\min_{\bm{x} \in \R^n} \ f_t(\bm{x}) := \sum_{i=1}^m(\|\bm{x} - \bm{a}_i\| - r_i^t)^2.
\end{equation}
Such a formulation is motivated by the fact that when the measurement noise vector $\bm{w}^t=(w_1^t,\ldots,w_m^t)$ is Gaussian, every optimal solution to Problem~\eqref{eq:loss} is a maximum-likelihood estimate of the true target position $\bm{x}_t^\star$; see, e.g.,~\cite{CMS04}. Henceforth, we shall use $\hat{\bm x}_t$ to denote an optimal solution to~\eqref{eq:loss} (i.e., $\hat{\bm x}_t \in \arg\min_{\bm{x}\in\R^n} f_t(\bm{x})$) and refer to it as a \emph{least-squares estimate} of the true target position $\bm{x}_t^\star$. In this paper, we propose to apply OGD to tackle the time-varying optimization formulation~\eqref{eq:loss}, as it may not be computationally feasible to find an (approximately) optimal solution to~\eqref{eq:loss} at every time step. Specifically, given an estimate $\bm{x}_{t-1}$ of the target position at time $t-1$ and the noisy range measurements $\{r_i^t\}_{i=1}^m$ at time $t$, we generate an estimate $\bm{x}_t$ of the target position at time $t$ via the one-step gradient descent update
\begin{align}\label{eq:toa-ogd}
\bm{x}_{t} = \bm{x}_{t-1} - \eta_t\nabla f_t(\bm{x}_{t-1}), \quad t=1,\ldots,T,
\end{align}
where $\eta_t>0$ is the step size. We remark that the update~\eqref{eq:toa-ogd} should be interpreted in a formal sense at this point, as the function $f_t$ is non-differentiable at $\bm{x} \in \{\bm{a}_1,\ldots,\bm{a}_m\}$. We shall justify the validity of~\eqref{eq:toa-ogd} in the following sections.

Naturally, we are interested in evaluating the performance of the sequence of position estimates $\{\bm{x}_t\}_{t=1}^T$. For that purpose, we employ the notion of CTTE, which is defined as
\[ 
{\rm CTTE}\left( \{\bm{x}_t\}_{t=1}^T \right) := \sum_{t=1}^T \| \bm{x}_t - \bm{x}_t^\star \|.
\] 
Note that the definition of CTTE involves the sequence of \emph{true target positions} $\{\bm{x}_t^\star\}_{t=1}^T$, not the sequence of \emph{optimal solutions} $\{\hat{\bm{x}}_t\}_{t=1}^T$ to Problem~\eqref{eq:loss}, as it is the former that we are interested in tracking. Indeed, a small CTTE implies that the estimate $\bm{x}_t$ is close to the true target position $\bm{x}_t^\star$ at every time step $t$. Our goal is to bound the CTTE in terms of the variations in the target trajectory $\{ \|\bm{x}_{t+1}^\star - \bm{x}_t^\star\| \}_{t=1}^{T-1}$ and the noise power $\{\sigma_t^2\}_{t=1}^T$ and to derive conditions that can guarantee a sublinear CTTE bound (i.e., $\tfrac{1}{T}{\rm CTTE}\left( \{\bm{x}_t\}_{t=1}^T \right) \rightarrow 0$) on the tracking performance of OGD. We remark that a sublinear CTTE bound is a desirable property for a tracking algorithm to have, as it implies that the target tracking error of the algorithm---i.e., the distance between the target position estimate produced by the algorithm and the true target position---vanishes asymptotically. In the next section, we will develop two results that are key to achieving this goal. Specifically, under the assumption that the power of the measurement noise $\sigma_t^2$ is sufficiently small, we will first establish a bound on the estimation error $\|\hat{\bm x}_t - \bm{x}_t^\star\|$ and then use this bound to show that the loss function $f_t$ is locally strongly convex at the least-squares estimate $\hat{\bm x}_t$.\footnote{A function $g:\R^n\limto\R$ is said to be \emph{locally strongly convex at $\bar{\bm x}$} if there exists an $r>0$ such that $g$ is strongly convex on the ball $B(\bar{\bm x},r)$~\cite{Vial82}.}

\section{Local Strong Convexity of TOA-Based Source Localization} \label{sec:str_cvx}
Consider a fixed time $t$. Then, Problem~\eqref{eq:loss} reduces to the classic TOA-based source localization problem (see, e.g.,~\cite{So19}), in which the target is considered static. For notational simplicity, we drop the index $t$ and write Problem~\eqref{eq:loss} as
\begin{align}\label{eq:toa-ml}
\min_{\bm{x}\in\R^n} \  f(\bm{x}) := \sum_{i=1}^m(\|\bm{x} - \bm{a}_i\| - r_i)^2
\end{align}
with $r_i=\|\bm{x}^{\star} - \bm{a}_i\| + w_i$. As before, we assume that $w_i$ is a random variable with mean zero, variance bounded above by $\sigma^2$ and satisfies $|w_i| \ll \| \bm{x}^{\star} - \bm{a}_i \|$. Let $\hat{\bm{x}} \in \arg\min_{\bm{x}\in\R^n} f(\bm{x})$ denote a least-squares estimate of the true target position $\bm{x}^\star$. The following proposition, which plays a crucial role in our subsequent development, shows that $\hat{\bm x}$ and $\bm{x}^\star$ are close when the power of the measurement noise vector $\bm{w}=(w_1,\ldots,w_m)$ is small.
\begin{proposition}[Estimation Error of Least-Squares Estimator] \label{thm:esterror}
Suppose that $\|\bm{w}\| \le c_0 \sqrt{m}\sigma$ for some constant $c_0>0$. Then, there exist constants $K_1,~K_2 > 0$, which are determined by $\bm{a}_1,\ldots,\bm{a}_m$ and $\bm{x}^\star$, such that
\[ 
\|\hat{\bm{x}}-\bm{x}^\star\| \leq K_1\sqrt{m}\sigma + K_2m\sigma^2.
\] 
\end{proposition}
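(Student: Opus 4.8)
The plan is to exploit the global optimality of $\hat{\bm x}$ together with an \emph{exact} (rather than linearized) passage from range perturbations to position perturbations, so that no basin-of-attraction or smallness assumption on $\hat{\bm x}$ is needed. Write $d_i^\star := \|\bm x^\star - \bm a_i\|$ and $\delta_i := \|\hat{\bm x} - \bm a_i\| - d_i^\star$, and set $\bm\delta := (\delta_1,\ldots,\delta_m)$ and $\bm e := \hat{\bm x} - \bm x^\star$. Since $r_i - d_i^\star = w_i$, we have $f(\bm x^\star) = \sum_{i=1}^m w_i^2 = \|\bm w\|^2$ and $f(\hat{\bm x}) = \sum_{i=1}^m (\delta_i - w_i)^2$. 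The first step is to expand the optimality inequality $f(\hat{\bm x}) \le f(\bm x^\star)$, which yields $\sum_i \delta_i^2 \le 2\sum_i \delta_i w_i \le 2\|\bm\delta\|\,\|\bm w\|$ by Cauchy--Schwarz, hence the \emph{exact, globally valid} bound
\[
\|\bm\delta\| \le 2\|\bm w\| \le 2 c_0 \sqrt{m}\,\sigma .
\]
I stress that this holds for \emph{any} global minimizer $\hat{\bm x}$, however far from $\bm x^\star$, since $\delta_i$ is defined without any approximation.

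The second step converts the control on the range residuals $\bm\delta$ into control on the position error $\bm e$. Squaring the identity $\|\hat{\bm x}-\bm a_i\| = d_i^\star + \delta_i$ and differencing the $i$th and first equations eliminates the common $\|\hat{\bm x}\|^2$ term, and subtracting the analogous identity evaluated at $\bm x^\star$ (where all residuals vanish) gives the \emph{linear} relations
\[
(\bm a_i - \bm a_1)^{\top}\bm e = \left(d_1^\star \delta_1 + \tfrac12\delta_1^2\right) - \left(d_i^\star \delta_i + \tfrac12\delta_i^2\right), \quad i = 2,\ldots,m .
\]
Collecting these into $\bm A\bm e = \bm b$, where $\bm A\in\R^{(m-1)\times n}$ has rows $(\bm a_i - \bm a_1)^{\top}$ and $\bm b$ is the right-hand vector, the spanning assumption on $\{\bm a_i - \bm a_1\}_{i=2}^m$ guarantees that $\bm A$ has full column rank, so $\bm A^{\top}\bm A \succ \bm 0$ and $\|\bm e\| \le \|\bm b\|/\sqrt{\lambda_{\min}(\bm A^{\top}\bm A)}$.

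The third step bounds $\|\bm b\|$. Each entry is a sum of a term linear in $\bm\delta$ (with coefficients $d_i^\star$) and a term quadratic in $\bm\delta$, so a routine application of the triangle and Cauchy--Schwarz inequalities gives $\|\bm b\| \le C_1\|\bm\delta\| + C_2\|\bm\delta\|^2$, where $C_1,C_2>0$ depend only on the true ranges $d_1^\star,\ldots,d_m^\star$ (hence on $\bm a_1,\ldots,\bm a_m$ and $\bm x^\star$). Substituting $\|\bm\delta\| \le 2c_0\sqrt{m}\,\sigma$ and absorbing $1/\sqrt{\lambda_{\min}(\bm A^{\top}\bm A)}$ into the constants yields
\[
\|\hat{\bm x} - \bm x^\star\| = \|\bm e\| \le K_1\sqrt{m}\,\sigma + K_2\, m\,\sigma^2,
\]
with $K_1 = 2 c_0 C_1/\sqrt{\lambda_{\min}(\bm A^{\top}\bm A)}$ and $K_2 = 4 c_0^2 C_2/\sqrt{\lambda_{\min}(\bm A^{\top}\bm A)}$, both determined by $\bm a_1,\ldots,\bm a_m$ and $\bm x^\star$, as claimed.

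I expect the main obstacle to be handling the nonlinearity of the Euclidean distance while keeping the estimate \emph{global}. The tempting route of linearizing each $\|\hat{\bm x}-\bm a_i\|$ about $\bm x^\star$ leaves one with a quadratic inequality in $\|\bm e\|$ whose two roots force a separate argument (e.g., a continuity or homotopy argument) to exclude a spurious far-away branch, thereby implicitly requiring $\hat{\bm x}$ to already lie near $\bm x^\star$. The squared-distance differencing in the second step is what sidesteps this difficulty: it produces an \emph{exact} linear system for $\bm e$ in which the nonlinearity is confined to the harmless quadratic terms $\tfrac12\delta_i^2$ inside $\bm b$, so the bound follows directly from the two norm estimates without any localization hypothesis. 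Everything else is routine constant bookkeeping, in particular verifying that the quadratic residual terms aggregate into the $m\sigma^2$ contribution.
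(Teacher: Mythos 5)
Your proposal is correct and follows essentially the same two-step strategy as the paper's proof: the optimality inequality $f(\hat{\bm x}) \le f(\bm{x}^\star)$ controls the range residuals, and the squared-range differencing converts this into an exact linear system whose matrix has full column rank by the spanning assumption. The only difference is organizational --- the paper routes the comparison through the intermediate OLS point $\bm{x}_{\rm OLS} = (\bm{A}^T\bm{A})^{-1}\bm{A}^T\bm{b}$ and applies the triangle inequality, whereas you relate $\hat{\bm x}$ and $\bm{x}^\star$ directly in terms of $\bm{\delta}$, which is marginally cleaner and sidesteps the paper's use of the assumption $|w_i| \ll \|\bm{x}^\star - \bm{a}_i\|$ in this step.
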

\noindent The proof of Proposition~\ref{thm:esterror} can be found in Appendix~\ref{app:esterror}.

The assumption on $\|\bm{w}\|$ in Proposition~\ref{thm:esterror} is rather mild, as it can be satisfied with high probability when, e.g.,  $w_1,\ldots,w_m$ are sub-Gaussian random variables~\cite[Chapter 3]{Ver18}. Now, using Proposition~\ref{thm:esterror}, we can prove the following theorem, which establishes the local strong convexity of $f$ at $\hat{\bm x}$ and provides an explicit estimate on the size of the strong convexity region around $\hat{\bm x}$. This constitutes our first main result in this paper.
\begin{theorem}[Local Strong Convexity of TOA-Based Source Localization] \label{thm:str_cvx}
Consider the setting of Proposition~\ref{thm:esterror}. Suppose that for some given $\delta>0$, the noise power $\sigma^2$ satisfies
\begin{equation} \label{eq:dist-asp}
\|\bm{x}^{\star} - \bm{a}_i\|>K_1\sqrt{m}\sigma + K_2m\sigma^2+\delta, \quad i=1,\ldots,m
\end{equation}
and
\begin{align}
\kappa &:= \frac{\delta}{10m} \cdot \Lambda - (K_1\sqrt{m}\sigma + K_2m\sigma^2) - \frac{4c_0\sigma}{5} > 0, \label{eq:eps}
\end{align}
where
\[ \Lambda := \lambda_{\min}\left( \sum_{i=1}^m \left( \frac{\bm{x}^\star-\bm{a}_i}{\|\bm{x}^\star-\bm{a}_i\|} \right)\left( \frac{\bm{x}^\star-\bm{a}_i}{\|\bm{x}^\star-\bm{a}_i\|}\right)^T \right). \]
Then, we have $\nabla^2f(\hat{\bm x}+\bm{\epsilon}) \succ \bm{0}$ for all $\bm{\epsilon} \in \R^n$ satisfying $\| \bm{\epsilon} \| \le \kappa$; i.e., $f$ is strongly convex over $B(\hat{\bm x},\kappa)$.
\end{theorem}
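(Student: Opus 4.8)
The plan is to lower-bound the smallest eigenvalue of $\nabla^2 f$ directly at every point $\bm{x} = \hat{\bm x} + \bm{\epsilon}$ with $\|\bm{\epsilon}\| \le \kappa$. First I would record that, away from the sensors, $f$ is twice differentiable with
\[
\nabla^2 f(\bm{x}) = \sum_{i=1}^m \left[ 2\bm{u}_i\bm{u}_i^T + \frac{2\left(\|\bm{x}-\bm{a}_i\| - r_i\right)}{\|\bm{x}-\bm{a}_i\|}\left(\bm{I} - \bm{u}_i\bm{u}_i^T\right) \right], \qquad \bm{u}_i := \frac{\bm{x}-\bm{a}_i}{\|\bm{x}-\bm{a}_i\|}.
\]
I would then split this into the first-order part $\bm{A}(\bm{x}) := 2\sum_{i=1}^m \bm{u}_i\bm{u}_i^T \succeq \bm{0}$ and the curvature residual $\bm{B}(\bm{x})$ collecting the remaining terms, and invoke Weyl's inequality, $\lambda_{\min}(\nabla^2 f(\bm{x})) \ge \lambda_{\min}(\bm{A}(\bm{x})) - \|\bm{B}(\bm{x})\|$. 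Note that at $\bm{x}^\star$ the first-order part equals $2\sum_i \bm{u}_i(\bm{x}^\star)\bm{u}_i(\bm{x}^\star)^T$, whose smallest eigenvalue is exactly $2\Lambda$; the whole argument then hinges on showing that $\bm{A}(\bm{x})$ stays close to this value while $\bm{B}(\bm{x})$ remains small.

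The geometric bookkeeping comes next. Writing $\rho := K_1\sqrt{m}\sigma + K_2 m\sigma^2$, Proposition~\ref{thm:esterror} gives $\|\hat{\bm x} - \bm{x}^\star\| \le \rho$, so by the triangle inequality $\|\bm{x} - \bm{x}^\star\| \le \rho + \kappa$ for every such $\bm{x}$. Combining this with the separation assumption~\eqref{eq:dist-asp} yields $\|\bm{x} - \bm{a}_i\| \ge \|\bm{x}^\star - \bm{a}_i\| - \|\bm{x} - \bm{x}^\star\| > \delta - \kappa > 0$ for each $i$, which simultaneously guarantees that $\bm{x}$ avoids every sensor (so the Hessian formula above is valid) and supplies the denominator lower bound needed below.

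I would then estimate the two pieces separately. For the first-order part I would use the Lipschitz bound $\|\bm{u}_i(\bm{x}) - \bm{u}_i(\bm{x}^\star)\| \le 2\|\bm{x}-\bm{x}^\star\|/\|\bm{x}-\bm{a}_i\|$ for the normalization map $\bm{y} \mapsto \bm{y}/\|\bm{y}\|$, together with $\|\bm{u}\bm{u}^T - \bm{v}\bm{v}^T\| \le 2\|\bm{u}-\bm{v}\|$ for unit vectors, to obtain $\|\bm{A}(\bm{x}) - \bm{A}(\bm{x}^\star)\| \le 8m(\rho+\kappa)/(\delta-\kappa)$ and hence $\lambda_{\min}(\bm{A}(\bm{x})) \ge 2\Lambda - 8m(\rho+\kappa)/(\delta-\kappa)$. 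For the residual I would bound each numerator by $|\|\bm{x}-\bm{a}_i\| - r_i| \le \|\bm{x}-\bm{x}^\star\| + |w_i| \le (\rho+\kappa) + |w_i|$ and each projection by $\|\bm{I} - \bm{u}_i\bm{u}_i^T\| \le 1$, and crucially use $\sum_i |w_i| = \|\bm{w}\|_1 \le \sqrt{m}\,\|\bm{w}\| \le c_0 m\sigma$ to get $\|\bm{B}(\bm{x})\| \le 2m(\rho + \kappa + c_0\sigma)/(\delta-\kappa)$.

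Finally I would combine the two estimates: strict positivity of the Hessian follows once $2\Lambda(\delta-\kappa) > 10m(\rho+\kappa) + 2mc_0\sigma$. Substituting the identity $\delta\Lambda = 10m\kappa + 10m\rho + 8mc_0\sigma$ that is implied by the definition~\eqref{eq:eps} of $\kappa$, the required inequality collapses to $\kappa(10m - 2\Lambda) + 10m\rho + 14mc_0\sigma > 0$, which holds because $\Lambda \le \tfrac{m}{n} \le m$ (the matrix defining $\Lambda$ is $n\times n$, positive semidefinite, and has trace $m$) and $\kappa > 0$. I expect the main obstacle to be the perturbation analysis of $\bm{A}(\bm{x})$: one must control the movement of each rank-one projection $\bm{u}_i\bm{u}_i^T$ \emph{uniformly} over the ball $B(\hat{\bm x},\kappa)$, and this is precisely where both the estimation-error bound (through $\rho$) and the sensor-separation margin $\delta$ (through the denominator $\delta-\kappa$) must be spent carefully so that the deliberately loose constants still close against the tight threshold encoded in $\kappa$.
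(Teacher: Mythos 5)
Your proof is correct, and it takes a genuinely different route from the paper's. The key divergence is in how you split the Hessian: you peel off the full unit-weight sum $\bm{A}(\bm{x})=2\sum_i\bm{u}_i\bm{u}_i^T$, whose smallest eigenvalue at $\bm{x}^\star$ is $2\Lambda$, and dump everything else into a residual bounded in operator norm; the paper instead keeps the weight $r_i/\|\hat{\bm x}+\bm{\epsilon}-\bm{a}_i\|$ attached to the rank-one terms, lower-bounds it by $1/4$ (using $|w_i|\ll\|\bm{x}^\star-\bm{a}_i\|$ to get $r_i\ge\|\bm{x}^\star-\bm{a}_i\|/2$), and so works with the smaller leading constant $\tfrac12\Lambda$. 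To compensate, the paper sharpens the perturbation step with a dedicated lemma giving the \emph{exact} value $\lambda_{\min}(\bm{u}\bm{u}^T-\bm{v}\bm{v}^T)=\tfrac12\bigl(\|\bm{u}\|^2-\|\bm{v}\|^2-\|\bm{u}-\bm{v}\|\,\|\bm{u}+\bm{v}\|\bigr)$ and with the tighter normalization bound $\bigl\|\tfrac{\bm{x}}{\|\bm{x}\|}-\tfrac{\bm{y}}{\|\bm{y}\|}\bigr\|\le\|\bm{x}-\bm{y}\|/\min\{\|\bm{x}\|,\|\bm{y}\|\}$ (constant $1$, denominator $\ge\delta/2$), whereas you use the crude $\|\bm{u}\bm{u}^T-\bm{v}\bm{v}^T\|\le2\|\bm{u}-\bm{v}\|$ and a factor-$2$ Lipschitz bound with denominator $\delta-\kappa$; your factor-of-four-larger leading term absorbs the slack. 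A second structural difference: the paper \emph{derives} the threshold, ending with positivity whenever $\|\bm{\epsilon}\|<\tfrac{\delta}{10m}\Lambda-(K_1\sqrt{m}\sigma+K_2m\sigma^2)-\tfrac{4c_0\sigma}{5}$, while you \emph{verify} the stated $\kappa$ by substituting $\delta\Lambda=10m\kappa+10m\rho+8mc_0\sigma$ and checking $\kappa(10m-2\Lambda)+10m\rho+14mc_0\sigma>0$ via $\Lambda\le m$ --- a perfectly legitimate, if less illuminating, way to establish the theorem as stated (your sufficient condition actually certifies a somewhat larger ball). One small presentational point: you assert $\delta-\kappa>0$ in the geometric bookkeeping before justifying it; the justification is the same $\Lambda\le m$ observation you invoke at the end (which gives $\kappa\le\delta/10$, exactly as in the paper), so you should state it up front since the denominator $\delta-\kappa$ and the validity of the Hessian formula both depend on it.
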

\noindent The proof of Theorem~\ref{thm:str_cvx} can be found in Appendix~\ref{app:str_cvx}. Here, let us elaborate on the assumptions of the theorem.
\begin{enumerate}
\item Condition~\eqref{eq:dist-asp} stipulates that the target should be sufficiently far from the sensors, which is not very restrictive in practice. Moreover, when combined with Proposition~\ref{thm:esterror}, the condition implies that $\| \hat{\bm x} - \bm{a}_i\| > \delta$ for $i=1,\ldots,m$, which shows that the loss function $f$ is smooth around the least-squares estimate $\hat{\bm x}$. This allows us to use the Hessian $\nabla^2f$ to characterize the local strong convexity of $f$ at $\hat{\bm x}$.

\item Since the vectors $\{\bm{a}_i-\bm{a}_1\}_{i=2}^m$ span $\R^n$ by assumption, it can be shown that the vectors $\{\bm{x}^\star - \bm{a}_i\}_{i=1}^m$ also span $\R^n$. This implies that $\Lambda > 0$. Thus, condition~\eqref{eq:eps} can be satisfied when $\sigma>0$ is sufficiently small (incidentally, condition~\eqref{eq:dist-asp} also becomes easier to satisfy as $\sigma$ becomes smaller). An important insight drawn from~\eqref{eq:eps} is that the landscape of the loss function $f$ around the least-squares estimate $\hat{\bm x}$ depends on the noise power level and the geometric configuration of the target and sensors.
\end{enumerate}

We remark that although the TOA-based source localization problem has been extensively studied in the literature, Theorem~\ref{thm:str_cvx} is, to the best of our knowledge, the first result that elicits the local strong convexity property of the non-convex least-squares formulation~\eqref{eq:toa-ml}. Now, since the strong convexity region $B(\hat{\bm x},\kappa)$ around $\hat{\bm x}$ is compact and $\nabla^2f$ is continuous over $B(\hat{\bm x},\kappa)$, we see that $\nabla f$ is Lipschitz continuous over $B(\hat{\bm x},\kappa)$. Thus, Theorem~\ref{thm:str_cvx} implies that when applying the gradient descent method to tackle Problem~\eqref{eq:toa-ml}, the resulting sequence of iterates will converge to the optimal solution $\hat{\bm x}$ at a linear rate, provided that the initial point lies in the strong convexity region around $\hat{\bm x}$. This can be deduced using the following well-known result.
\begin{fact}[Linear Convergence of Gradient Descent for Strongly Convex Minimization; cf.~{\cite[Theorem 2.1.15]{N04}}]\label{thm:conv_GD} 
Let $g:\mathbb{R}^n\rightarrow\mathbb{R}$ be a function that is smooth, $\mu$-strongly convex, and $L$-gradient Lipschitz continuous on an open convex set $\mathcal{X}\subseteq\mathbb{R}^n$. Suppose that $g$ has a global minimizer $\hat{\bm x}$ over $\mathcal{X}$. Then, the sequence $\{\bm{x}_k\}_{k\ge0}$ generated by the gradient descent method 
\[ \bm{x}_{k+1} = \bm{x}_k - \eta\nabla g(\bm{x}_k) \]
with initial point $\bm{x}_0 \in \mathcal{X}$ and step size $\eta \in (0,2/(\mu + L)]$ satisfies
\[
\|\bm{x}_{k+1} - \hat{\bm{x}}\|^2 \leq\left(1 - \frac{2\eta\mu L}{\mu+L}\right)\|\bm{x}_k - \hat{\bm{x}}\|^2.
\]
\end{fact}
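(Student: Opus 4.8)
The plan is to reduce the claim to the single-step contraction inequality and to establish that inequality by combining the first-order optimality of $\hat{\bm x}$ with the standard interplay between strong convexity and gradient Lipschitz continuity. First I would record that, since $\hat{\bm x}$ is a global minimizer of the smooth function $g$ over the \emph{open} convex set $\mathcal{X}$, the first-order optimality condition gives $\nabla g(\hat{\bm x}) = \bm{0}$. This lets me rewrite one gradient step as $\bm{x}_{k+1} - \hat{\bm x} = (\bm{x}_k - \hat{\bm x}) - \eta\big(\nabla g(\bm{x}_k) - \nabla g(\hat{\bm x})\big)$, and expanding the squared Euclidean norm yields
\[
\|\bm{x}_{k+1} - \hat{\bm x}\|^2 = \|\bm{x}_k - \hat{\bm x}\|^2 - 2\eta\,\langle \nabla g(\bm{x}_k) - \nabla g(\hat{\bm x}),\, \bm{x}_k - \hat{\bm x}\rangle + \eta^2\|\nabla g(\bm{x}_k) - \nabla g(\hat{\bm x})\|^2.
\]

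The crux is to lower-bound the inner product. Here I would invoke the key inequality for functions that are simultaneously $\mu$-strongly convex and $L$-gradient Lipschitz: for all $\bm{x},\bm{y} \in \mathcal{X}$,
\[
\langle \nabla g(\bm{x}) - \nabla g(\bm{y}),\, \bm{x} - \bm{y}\rangle \ge \frac{\mu L}{\mu + L}\|\bm{x} - \bm{y}\|^2 + \frac{1}{\mu + L}\|\nabla g(\bm{x}) - \nabla g(\bm{y})\|^2.
\]
Applying this with $\bm{x} = \bm{x}_k$, $\bm{y} = \hat{\bm x}$ and substituting into the expansion gives
\[
\|\bm{x}_{k+1} - \hat{\bm x}\|^2 \le \Big(1 - \frac{2\eta\mu L}{\mu + L}\Big)\|\bm{x}_k - \hat{\bm x}\|^2 + \eta\Big(\eta - \frac{2}{\mu + L}\Big)\|\nabla g(\bm{x}_k) - \nabla g(\hat{\bm x})\|^2.
\]
The final step is a sign check on the step size: for $\eta \in (0, 2/(\mu + L)]$ the factor $\eta(\eta - 2/(\mu+L))$ is nonpositive, so the last term may be dropped, delivering exactly the advertised contraction. (A short auxiliary remark via the AM–GM bound $(\mu+L)^2 \ge 4\mu L$ confirms the contraction factor lies in $[0,1)$.)

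Two technical points deserve care. The main obstacle is establishing the co-coercivity-type inequality itself, as it is the only non-elementary ingredient. I would obtain it by passing to the auxiliary function $\phi(\bm{x}) := g(\bm{x}) - \tfrac{\mu}{2}\|\bm{x}\|^2$, which is convex and $(L-\mu)$-gradient Lipschitz on $\mathcal{X}$ (note $L \ge \mu$ always holds under the hypotheses); applying the classical co-coercivity bound $\langle \nabla\phi(\bm{x}) - \nabla\phi(\bm{y}), \bm{x} - \bm{y}\rangle \ge \tfrac{1}{L-\mu}\|\nabla\phi(\bm{x}) - \nabla\phi(\bm{y})\|^2$ for convex smooth functions and re-expanding in terms of $\nabla g$ yields the desired bound, with the degenerate case $L = \mu$ (where $g$ is quadratic and $\nabla g$ is affine) handled directly. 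The second point is well-definedness of the iterates: each gradient evaluation requires $\bm{x}_k \in \mathcal{X}$. Since the contraction factor lies in $[0,1)$, the distances $\|\bm{x}_k - \hat{\bm x}\|$ are non-increasing, so an induction confirms that the trajectory remains in $B(\hat{\bm x}, \|\bm{x}_0 - \hat{\bm x}\|)$; provided this ball sits in $\mathcal{X}$ (as in our intended application with $\mathcal{X}$ a strong convexity region from Theorem~\ref{thm:str_cvx}), every gradient is legitimate and the per-step bound applies at each $k$.
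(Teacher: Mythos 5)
Your proposal is correct and follows essentially the same route as the paper, which offers no proof of its own but states the result as a Fact and defers to Nesterov's Theorem 2.1.15 --- whose proof is exactly your argument: the one-step expansion using $\nabla g(\hat{\bm x})=\bm{0}$, the strongly convex co-coercivity inequality (Nesterov's Theorem 2.1.12, derived via the auxiliary function $\phi = g - \tfrac{\mu}{2}\|\cdot\|^2$), and the sign check on the step size. The only point worth noting is that the classical co-coercivity bound you invoke is classically stated for functions on all of $\mathbb{R}^n$, whereas here the hypotheses hold only on the open convex set $\mathcal{X}$; the inequality does remain valid in that setting (e.g., for $C^2$ functions via the averaged Hessian $\bar{\bm H}=\int_0^1 \nabla^2 g(\bm{y}+s(\bm{x}-\bm{y}))\,ds$, which satisfies $\mu \bm{I} \preceq \bar{\bm H} \preceq L\bm{I}$ so that $(\bar{\bm H}-\mu \bm{I})(L\bm{I}-\bar{\bm H})\succeq \bm{0}$, and by mollification in general), so your reasoning --- like the paper's citation --- is sound as stated.
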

\noindent In particular, Theorem~\ref{thm:str_cvx} provides a means to justify the good empirical performance of gradient-based schemes observed in~\cite{BTC08} when solving the TOA-based source localization problem.

\section{CTTE of OGD for TOA-Based Tracking}\label{sec:toa-reg}
Let us now address the main goal of this paper---namely, to establish a bound on the CTTE of OGD for TOA-based tracking. The results in Section~\ref{sec:str_cvx} suggest that if the iterate generated by OGD at time $t$ lies in the strong convexity region of the loss function at time $t+1$ for $t=0,1,\ldots,T-1$, then the tracking problem is essentially reduced to that of minimizing a time-varying strongly convex function. This opens up the possibility of using techniques from online strongly convex optimization to bound the CTTE of OGD for TOA-based tracking.

To realize the above idea, we need to first introduce some additional preliminaries and collect some consequences of the results in Section~\ref{sec:str_cvx}. Observe that the constants $K_1, K_2, \Lambda$ in Theorem~\ref{thm:str_cvx} involve the target position $\bm{x}^\star$. Since the target is moving in the tracking setting, it will simplify our subsequent analysis if we can find uniform bounds on these constants. Towards that end, we further assume that the target stays within a fixed compact region $\mathcal{T} \subseteq \R^n$ throughout the tracking task. Such an assumption is rather mild in practice. Moreover, since $K_1,K_2,\Lambda$ depend continuously on $\bm{x}^\star$, it implies the existence of finite upper bounds on $K_1, K_2$ and a positive lower bound on $\Lambda$ that hold for all $t\ge0$. As a slight abuse of notation, we shall use $K_1,K_2,\Lambda$ to denote these uniform bounds in the sequel.

Following the setting of Theorem~\ref{thm:str_cvx}, let $\hat{\bm x}_t \in \arg\min_{\bm{x}\in\R^n} f_t(\bm{x})$ denote a least-squares estimate of the true target position $\bm{x}_t^\star$ at time $t$ and $c_0>0$ be a constant such that $\|\bm{w}^t\| \le c_0\sqrt{m}\sigma_t$ for $t=1,\ldots,T$. Furthermore, suppose that for some given $\delta>0$, the maximum noise power $\sigma^2 := \max_{t\in\{1,\ldots,T\}} \sigma_t^2$ satisfies
\begin{align} 
\|\bm{x}_t^{\star} - \bm{a}_i\| &> K_1\sqrt{m}\sigma + K_2m\sigma^2+\delta, \nonumber \\
&\qquad\qquad i=1,\ldots,m; \, t=1,\ldots,T \label{eq:dist-asp-dyn}
\end{align}
and
\begin{align}
\kappa &:= \frac{\delta}{10m} \cdot \Lambda - (K_1\sqrt{m}\sigma + K_2m\sigma^2) - \frac{4c_0\sigma}{5} > 0 \label{eq:eps-dyn}
\end{align}
(recall from the discussion in the preceding paragraph that $K_1,K_2,\Lambda$ are now uniform in $t$ and hence $\kappa$ is also uniform in $t$). Then, using Theorem~\ref{thm:str_cvx}, the expressions for $\nabla f_t, \nabla^2 f_t$, and the assumption that the target stays within the compact region $\mathcal{T}$, we deduce the existence of constants $\mu,L>0$ such that for $t=1,\ldots,T$,
\begin{enumerate}
\item $f_t$ is $\mu$-strongly convex over $B(\hat{\bm{x}}_t,\kappa)$---i.e., for any $\bm{x},\bm{y}\in B(\hat{\bm{x}}_t,\kappa)$,
\begin{align}\label{eq:toa-strcvx}
f_t(\bm{x}) \geq f_t(\bm{y}) +\nabla f_t(\bm{y})^T(\bm{x}-\bm{y})+\frac{\mu}{2}\|\bm{x}-\bm{y}\|^2;
\end{align}

\item $\nabla f_t$ is $L$-Lipschitz continuous over $B(\hat{\bm{x}}_t,\kappa)$---i.e., for any $\bm{x},\bm{y}\in B(\hat{\bm{x}}_t,\kappa)$,
\begin{align}\label{eq:toa-gradlip}
\|\nabla f_t(\bm{y}) - \nabla f_t(\bm{x})\|\leq L\|\bm{x}-\bm{y}\|;
\end{align}
\end{enumerate}

Now, let $\{\bm{x}_t\}_{t=1}^T$ be the sequence of iterates generated by the OGD update~\eqref{eq:toa-ogd} with initial point $\bm{x}_0$ and step size $\eta_t \equiv \eta \in (0,2/(\mu+L)]$ for $t=1,\ldots,T$. In addition, let $v_t := \|\bm{x}_{t+1}^\star - \bm{x}_t^\star\|$ ($t=1,\ldots,T-1$) denote the variation in the true target position between time $t$ and $t+1$ and $v := \max_{t\in\{1,\ldots,T-1\}} v_t$ denote the maximum variation in the true target position between successive time steps. The following proposition shows that under suitable conditions, OGD maintains the invariant that the iterate generated at the current time step lies in the strong convexity region of the loss function at the next time step.
\begin{proposition}[Invariant of OGD] \label{prop:ogd-inv}
Suppose that in addition to~\eqref{eq:dist-asp-dyn} and~\eqref{eq:eps-dyn}, the maximum noise power $\sigma^2$ and maximum variation $v$ satisfy
\begin{equation} \label{eq:eps-rad}
\kappa \ge \frac{2(K_1\sqrt{m}\sigma + K_2m\sigma^2) + v}{1-\rho},
\end{equation}
where $\rho := \left( 1-\tfrac{2 \eta \mu L}{\mu+L} \right)^{1/2} \in (0,1)$ with $\mu,L$ given by~\eqref{eq:toa-strcvx},~\eqref{eq:toa-gradlip}, respectively, and $\kappa > 0$ is the radius of the strong convexity region of the loss function $f_t$ around the least-squares estimate $\hat{\bm x}_t$ for $t=1,\ldots,T$. Furthermore, suppose that the initial point $\bm{x}_0$ satisfies $\|\bm{x}_0 - \bm{x}_1^\star\| \le K_1\sqrt{m}\sigma + K_2m\sigma^2$. Then, for $t=0,1,\ldots,T-1$, the iterate $\bm{x}_t$ lies in the strong convexity region $B(\hat{\bm x}_{t+1},\kappa)$ of the loss function $f_{t+1}$.
\end{proposition}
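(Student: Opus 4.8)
The plan is to prove the invariant by induction on $t$, treating each ball $B(\hat{\bm x}_{t+1},\kappa)$ as a region the OGD iterate cannot escape. Throughout I would abbreviate the uniform estimation-error bound as $E := K_1\sqrt{m}\sigma + K_2 m\sigma^2$, so that Proposition~\ref{thm:esterror} reads $\|\hat{\bm x}_t - \bm{x}_t^\star\|\le E$ for every $t$. For the base case $t=0$, I would combine the initialization hypothesis $\|\bm{x}_0 - \bm{x}_1^\star\|\le E$ with $\|\hat{\bm x}_1 - \bm{x}_1^\star\|\le E$ via the triangle inequality to get $\|\bm{x}_0 - \hat{\bm x}_1\|\le 2E$, and then observe that since $\rho\in(0,1)$ and $v\ge0$, condition~\eqref{eq:eps-rad} forces $\kappa\ge\frac{2E+v}{1-\rho}\ge 2E$; hence $\bm{x}_0\in B(\hat{\bm x}_1,\kappa)$.

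For the inductive step I would fix $t\ge1$ and assume $\bm{x}_{t-1}\in B(\hat{\bm x}_t,\kappa)$. On this ball $f_t$ is smooth, $\mu$-strongly convex~\eqref{eq:toa-strcvx}, and $L$-gradient-Lipschitz~\eqref{eq:toa-gradlip}, with minimizer $\hat{\bm x}_t$, so Fact~\ref{thm:conv_GD} applies to the single update~\eqref{eq:toa-ogd} and, after taking square roots, yields the contraction $\|\bm{x}_t - \hat{\bm x}_t\|\le\rho\|\bm{x}_{t-1}-\hat{\bm x}_t\|\le\rho\kappa$ (which, since $\rho<1$, also confirms $\bm{x}_t$ stays inside $B(\hat{\bm x}_t,\kappa)$, so the step is legitimate). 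To convert this into membership in the \emph{next} ball, I would bound the drift between consecutive minimizers by inserting the two true positions:
\[
\|\hat{\bm x}_t - \hat{\bm x}_{t+1}\| \le \|\hat{\bm x}_t - \bm{x}_t^\star\| + \|\bm{x}_t^\star - \bm{x}_{t+1}^\star\| + \|\bm{x}_{t+1}^\star - \hat{\bm x}_{t+1}\| \le 2E + v.
\]
A final triangle inequality then gives $\|\bm{x}_t - \hat{\bm x}_{t+1}\|\le\rho\kappa + 2E + v$, and this is at most $\kappa$ exactly when $2E+v\le(1-\rho)\kappa$, i.e.\ precisely condition~\eqref{eq:eps-rad}; this closes the induction.

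The step I expect to be the crux is this last bridging from $\hat{\bm x}_t$ to $\hat{\bm x}_{t+1}$: the contraction in Fact~\ref{thm:conv_GD} only pulls the iterate toward the \emph{current} minimizer, whereas the invariant is phrased relative to the minimizer of the \emph{next} loss. It is here that the static estimation bound (used twice, contributing $2E$) meets the target motion (contributing $v$), and recognizing that~\eqref{eq:eps-rad} is exactly the fixed-point condition $\rho\kappa + (2E+v)\le\kappa$ that keeps the radius $\kappa$ invariant is the key insight. A secondary point I would verify is that $\kappa<\delta$—which follows from~\eqref{eq:eps-dyn} once one notes $\Lambda\le m$—so that condition~\eqref{eq:dist-asp-dyn} keeps the whole ball $B(\hat{\bm x}_t,\kappa)$ bounded away from the sensor locations $\bm{a}_1,\ldots,\bm{a}_m$; this guarantees that $f_t$ is genuinely differentiable there and legitimizes interpreting~\eqref{eq:toa-ogd} as a true gradient step.
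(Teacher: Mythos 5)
Your proposal is correct and follows essentially the same route as the paper's own proof: induction on $t$, with the base case from the initialization hypothesis plus Proposition~\ref{thm:esterror}, and the inductive step combining the contraction from Fact~\ref{thm:conv_GD} with the triangle-inequality bound $\|\hat{\bm x}_t - \hat{\bm x}_{t+1}\| \le 2(K_1\sqrt{m}\sigma + K_2m\sigma^2) + v_t$ through the true positions, closed by the fixed-point condition~\eqref{eq:eps-rad}. Your additional remark that $\kappa < \delta$ keeps the ball away from the sensors is a sound observation (the paper makes the analogous check, $\kappa < \delta/10$, inside the proof of Theorem~\ref{thm:str_cvx}).
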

\begin{proof}
We proceed by induction on $t$. For $t=0$, we have
\begin{align}
\| \bm{x}_0 - \hat{\bm x}_1 \| &\le \| \bm{x}_0 - \bm{x}_1^\star \| + \| \bm{x}_1^\star - \hat{\bm x}_1 \| \nonumber \\
&\le 2(K_1\sqrt{m}\sigma + K_2m\sigma^2) \label{eq:init-bd} \\
&\le \kappa, \nonumber
\end{align}
where the second inequality follows from our assumption on $\bm{x}_0$ and Proposition~\ref{thm:esterror} and the last follows from our choice of $\kappa$ in~\eqref{eq:eps-rad}. This establishes the base case. Now, for $t\ge1$, we have
\begin{align*}
&~ \| \bm{x}_t - \hat{\bm x}_{t+1} \| \le \| \bm{x}_t - \hat{\bm x}_t \| + \| \hat{\bm x}_t - \hat{\bm x}_{t+1} \| \\
\le&~ \rho \| \bm{x}_{t-1} - \hat{\bm x}_t \| + \| \hat{\bm x}_t - \bm{x}_t^\star \| + \| \bm{x}_{t+1}^\star - \hat{\bm x}_{t+1} \| \\
&\quad~ + \| \bm{x}_t^\star - \bm{x}_{t+1}^\star \| \\
\le&~ \rho\kappa + 2(K_1\sqrt{m}\sigma + K_2m\sigma^2) + v_t \\
\le&~ \kappa,
\end{align*} 
where the second inequality follows from the OGD update~\eqref{eq:toa-ogd}, the inductive hypothesis (i.e., $\bm{x}_{t-1}$ lies in the strong convexity region of $f_t$), and Fact~\ref{thm:conv_GD}; the third follows from the inductive hypothesis and Proposition~\ref{thm:esterror}; the last follows from our choice of $\kappa$ in~\eqref{eq:eps-rad}. This completes the inductive step and also the proof of Proposition~\ref{prop:ogd-inv}.
\end{proof}

We remark that since the loss functions $\{f_t\}_{t=1}^T$ are non-convex, some conditions on the maximum noise power, maximum variation, and quality of the initial point are to be expected in the CTTE analysis of OGD for tackling the TOA-based tracking problem~\eqref{eq:loss}. In fact, the performance analysis of ONM for general time-varying non-convex optimization in~\cite{LTS20}, though focusing on the dynamic regret metric, makes use of similar conditions on the maximum variation and quality of the initial point as those in Proposition~\ref{prop:ogd-inv}.

Armed with Proposition~\ref{prop:ogd-inv}, we can prove the following theorem, which establishes a CTTE bound for OGD when it is applied to the TOA-based tracking problem. This constitutes our second main result in this paper.
\begin{theorem}[CTTE of OGD for TOA-Based Tracking] \label{thm:ogd-ctte}
Under the setting of Proposition~\ref{prop:ogd-inv}, the sequence of iterates $\{\bm{x}_t\}_{t=1}^T$ satisfies 
\[ {\rm CTTE}\left( \{\bm{x}_t\}_{t=1}^T \right) = \mathcal{O}(1 + V(T) + N_1(T) + N_2(T)), \]
where $V(T) := \sum_{t=1}^{T-1} \| \bm{x}_{t+1}^\star - \bm{x}_t^\star \| = \sum_{t=1}^{T-1} v_t$ denotes the path length of the target trajectory, $N_1(T):=\sum_{t=1}^T \sigma_t$ denotes the cumulative noise standard deviation, and $N_2(T):=\sum_{t=1}^T \sigma_t^2$ denotes the cumulative noise variance. 
\end{theorem}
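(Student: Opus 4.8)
The plan is to control the cumulative tracking error by splitting each summand $\|\bm{x}_t - \bm{x}_t^\star\|$ via the triangle inequality into an \emph{optimization error} $\|\bm{x}_t - \hat{\bm{x}}_t\|$ and an \emph{estimation error} $\|\hat{\bm{x}}_t - \bm{x}_t^\star\|$. The estimation error is handled immediately by Proposition~\ref{thm:esterror}, which gives $\|\hat{\bm{x}}_t - \bm{x}_t^\star\| \le K_1\sqrt{m}\sigma_t + K_2 m \sigma_t^2$; summing over $t$ contributes exactly $K_1\sqrt{m}\,N_1(T) + K_2 m\, N_2(T) = \mathcal{O}(N_1(T) + N_2(T))$ to the CTTE. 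It therefore remains to bound $\sum_{t=1}^T \|\bm{x}_t - \hat{\bm{x}}_t\|$, and the whole argument reduces to estimating this optimization error.

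For the optimization error, write $e_t := \|\bm{x}_t - \hat{\bm{x}}_t\|$ and $b_t := K_1\sqrt{m}\sigma_t + K_2 m \sigma_t^2$. The engine of the proof is that the invariant from Proposition~\ref{prop:ogd-inv} guarantees $\bm{x}_{t-1} \in B(\hat{\bm{x}}_t,\kappa)$ at every step, so the OGD update~\eqref{eq:toa-ogd} is a genuine gradient-descent step performed inside the strong convexity region of $f_t$; Fact~\ref{thm:conv_GD} then yields the one-step contraction $e_t \le \rho\,\|\bm{x}_{t-1} - \hat{\bm{x}}_t\|$. I would then insert $\hat{\bm{x}}_{t-1}$ and bound the drift of the minimizer by routing through the true positions, $\|\hat{\bm{x}}_{t-1} - \hat{\bm{x}}_t\| \le \|\hat{\bm{x}}_{t-1} - \bm{x}_{t-1}^\star\| + \|\bm{x}_{t-1}^\star - \bm{x}_t^\star\| + \|\bm{x}_t^\star - \hat{\bm{x}}_t\| \le b_{t-1} + v_{t-1} + b_t$, using Proposition~\ref{thm:esterror} twice and the definition of $v_{t-1}$. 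Combining these yields the linear recursion
\[
e_t \le \rho\, e_{t-1} + \rho\,(b_{t-1} + v_{t-1} + b_t), \qquad t \ge 2,
\]
with base case $e_1 = \mathcal{O}(1)$ obtained exactly as in the $t=0$ step of Proposition~\ref{prop:ogd-inv} (from the assumption on $\bm{x}_0$ and Proposition~\ref{thm:esterror}).

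Finally, I would unroll the recursion to get $e_t \le \rho^{t-1} e_1 + \sum_{k=2}^{t} \rho^{t-k+1}(b_{k-1} + v_{k-1} + b_k)$, sum over $t = 1,\ldots,T$, and exchange the order of summation in the resulting double sum. Since $\rho \in (0,1)$, each geometric factor $\sum_{t \ge k}\rho^{t-k+1}$ is bounded by $\rho/(1-\rho) = \mathcal{O}(1)$, so the inner series collapse and give
\[
\sum_{t=1}^T e_t = \mathcal{O}\!\left( 1 + \sum_{t=1}^T b_t + \sum_{t=1}^{T-1} v_t \right) = \mathcal{O}(1 + V(T) + N_1(T) + N_2(T)),
\]
and adding back the estimation-error contribution completes the bound. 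The geometric-series bookkeeping is routine; the step demanding the most care is the minimizer-drift estimate, since the contraction in Fact~\ref{thm:conv_GD} is anchored at $\hat{\bm{x}}_t$ whereas the CTTE is measured against $\bm{x}_t^\star$, forcing the argument to repeatedly convert between $\hat{\bm{x}}_t$ and $\bm{x}_t^\star$ through Proposition~\ref{thm:esterror}. It is precisely the invariant of Proposition~\ref{prop:ogd-inv} that licenses invoking the contraction at \emph{every} time step, which is what makes the telescoping of a non-convex tracking error possible in the first place.
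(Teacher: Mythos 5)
Your proposal is correct and follows essentially the same route as the paper: the same triangle-inequality split into optimization and estimation error, the same one-step contraction from Fact~\ref{thm:conv_GD} licensed by the invariant of Proposition~\ref{prop:ogd-inv}, and the same minimizer-drift bound routed through the true positions via Proposition~\ref{thm:esterror}. The only (immaterial) difference is bookkeeping: you unroll the recursion and swap summation order, whereas the paper sums the recursion over $t$ and solves algebraically for $\sum_{t=1}^T \|\bm{x}_t - \hat{\bm x}_t\|$, both yielding the same $\rho/(1-\rho)$ factor.
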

\begin{proof}
Using the definition of CTTE and the triangle inequality, we have
\begin{align}
{\rm CTTE}\left( \{\bm{x}_t\}_{t=1}^T \right) &= \sum_{t=1}^T \| \bm{x}_t - \bm{x}_t^\star \| \nonumber \\
&\le \sum_{t=1}^T \| \bm{x}_t - \hat{\bm x}_t \| + \sum_{t=1}^T \| \hat{\bm x}_t - \bm{x}_t^\star \|. \label{eq:pre-ctte}
\end{align} 
Let us now bound the two terms in~\eqref{eq:pre-ctte} separately.

For the first term, we begin by adapting the argument used in the proof of~\cite[Theorem 1]{MSJ+16} to our time-varying optimization setting and bound
\begin{align*}
&~ \sum_{t=1}^T \| \bm{x}_t - \hat{\bm x}_t \| \le \rho \sum_{t=1}^T \| \bm{x}_{t-1} - \hat{\bm x}_t \| \\
\le&~ \rho \| \bm{x}_0 - \hat{\bm x}_1 \| + \rho \sum_{t=2}^T \| \bm{x}_{t-1} - \hat{\bm x}_{t-1} \| + \rho \sum_{t=2}^T \| \hat{\bm x}_{t-1} - \hat{\bm x}_t \| \\
=&~ \rho \left( \| \bm{x}_0 - \hat{\bm x}_1 \|  - \| \bm{x}_T - \hat{\bm x}_T \| \right) + \rho \sum_{t=1}^T \| \bm{x}_t - \hat{\bm x}_t \| \\
&~\, + \rho \sum_{t=1}^{T-1} \| \hat{\bm x}_t - \hat{\bm x}_{t+1} \|,
\end{align*}
where the first inequality follows from the OGD update~\eqref{eq:toa-ogd}, Proposition~\ref{prop:ogd-inv}, and Fact~\ref{thm:conv_GD}. It follows that
\begin{align}
\sum_{t=1}^T \| \bm{x}_t - \hat{\bm x}_t \| \le \frac{\rho}{1-\rho} \left( \| \bm{x}_0 - \hat{\bm x}_1 \|  + \sum_{t=1}^{T-1} \| \hat{\bm x}_t - \hat{\bm x}_{t+1} \| \right). \label{eq:cum-err}
\end{align}
Now, using~\eqref{eq:init-bd}, we get
\[ \| \bm{x}_0 - \hat{\bm x}_1 \| \le 2(K_1\sqrt{m}\sigma+K_2m\sigma^2). \]
Furthermore, we have
\begin{align*}
&~ \sum_{t=1}^{T-1} \| \hat{\bm x}_t - \hat{\bm x}_{t+1} \|  \\
\le&~ \sum_{t=1}^{T-1} \left( \| \hat{\bm x}_t - \bm{x}_t^\star \| + \| \bm{x}_t^\star - \bm{x}_{t+1}^\star \| + \| \bm{x}_{t+1}^\star - \hat{\bm x}_{t+1} \| \right) \\
\le&~ \sum_{t=1}^{T-1} \left( K_1\sqrt{m}(\sigma_t+\sigma_{t+1}) + K_2m(\sigma_t^2+\sigma_{t+1}^2) + v_t \right) \\
=&~ \sum_{t=1}^{T-1} v_t + 2K_1\sqrt{m} \sum_{t=1}^{T} \sigma_t + 2K_2m\sum_{t=1}^{T} \sigma_t^2,
\end{align*}
where the second inequality follows from Proposition~\ref{thm:esterror}. Substituting the above into~\eqref{eq:cum-err} yields 
\[ \sum_{t=1}^T \| \bm{x}_t - \hat{\bm x}_t \| = \mathcal{O}(1 + V(T) + N_1(T) + N_2(T)). \]

For the second term, we simply invoke Proposition~\ref{thm:esterror} to get
\begin{align*}
\sum_{t=1}^T \| \hat{\bm x}_t - \bm{x}_t^\star \| &\le K_1\sqrt{m} \sum_{t=1}^T \sigma_t + K_2 m \sum_{t=1}^T \sigma_t^2 \\
&= \mathcal{O}(N_1(T) + N_2(T)).
\end{align*}

The desired result now follows by substituting the above into~\eqref{eq:pre-ctte}.
\end{proof}

Theorem~\ref{thm:ogd-ctte} reveals that OGD can achieve sublinear CTTE when both the path length $V(T)$ and the cumulative noise power $N_2(T)$ grow sublinearly (note that the latter, together with the fact that $N_1(T) \le \sqrt{T \cdot N_2(T)}$, implies the sublinear growth of the cumulative noise standard deviation $N_1(T)$). Roughly speaking, this means that if the target is not moving too fast and the noise power decays at a sufficiently fast rate over time, then the target tracking error of OGD will vanish asymptotically. It is important to note that our CTTE bound is expressed in terms of the path length of the \emph{target trajectory} (i.e., $V(T) = \sum_{t=1}^{T-1} \| \bm{x}_{t+1}^\star - \bm{x}_t^\star \|$), not the path length of the \emph{optimal solution trajectory} of the time-varying loss function (i.e., $V'(T) := \sum_{t=1}^{T-1} \| \hat{\bm x}_{t+1} - \hat{\bm x}_t \|$). Although the latter is commonly used in existing performance analyses of online methods (see, e.g.,~\cite{MSJ+16,BSR18,LTS20}), the former captures the actual variations in the target trajectory and is thus more relevant to the tracking problem considered in this paper. It is also worth noting that our CTTE bound shows explicitly how the TOA measurement noise affects the tracking performance of OGD through the terms $N_1(T)$ and $N_2(T)$.

\section{Numerical Simulations} \label{sec:sim}
In this section, we present numerical results to demonstrate the efficacy of OGD for the TOA-based tracking problem and illustrate our theoretical findings. Specifically, we apply both OGD and ONM---the latter has previously been used in~\cite{LTS20} to tackle the TOA-based tracking problem---to various test instances and compare their tracking performance. In all the considered instances, there are $m=3$ sensors, which are located at
$\bm{a}_1 = 
\begin{bmatrix}
0.5 & 0.5
\end{bmatrix}
^T$, 
$\bm{a}_2 = 
\begin{bmatrix}
0 & 0.5
\end{bmatrix}
^T$,
and $\bm{a}_3 = 
\begin{bmatrix}
0.5 & 0
\end{bmatrix}
^T$. Given the time horizon of interest $T$ and the target trajectory $\{\bm{x}_t^\star\}_{t=1}^T$, the measurement noise $w_i^t$ in~\eqref{eq:toa-model} is generated according to the Gaussian distribution with mean zero and variance $\sigma_t^2$ for $i=1,\ldots,m$; $t=1,\ldots,T$, and the TOA-based range measurements $\{ r_i^t : i=1,\ldots,m; \, t=1,\ldots,T \}$ are then obtained using~\eqref{eq:toa-model}. We consider two initialization strategies for OGD and ONM. One is \emph{exact initialization}, which assumes that the true initial target position $\bm{x}_1^\star$ is known and takes $\bm{x}_0=\bm{x}_1^\star$ as the initial point. The other is \emph{ordinary least-squares (OLS) initialization}, which takes 
\begin{equation} \label{eq:OLS-init}
\bm{x}_0 = (\bm{A}^T\bm{A})^{-1}\bm{A}^T\bm{b}_1
\end{equation}
with
\begin{align}
\bm{A} &:=
\begin{bmatrix}
(\bm{a}_2-\bm{a}_1)^T\\
\vdots\\
(\bm{a}_m-\bm{a}_{m-1})^T\\
\end{bmatrix}, \label{eq:LS-A} \\
\bm{b}_1 &:= 
\frac{1}{2}\begin{bmatrix}
\|\bm{a}_2\|^2 - \|\bm{a}_1\|^2 + (r_1^1)^2 - (r_2^1)^2 \\
\vdots\\
\|\bm{a}_m\|^2 - \|\bm{a}_{m-1}\|^2 + (r_{m-1}^1)^2 - (r_m^1)^2
\end{bmatrix} \label{eq:LS-b}
\end{align}
as the initial point; see~\cite{STK05}. The OLS estimate in~\eqref{eq:OLS-init} can be obtained as follows: Observe that any $\bm{x}$ satisfying
\[ \| \bm{x} - \bm{a}_i \|^2 \approx (r_i^1)^2, \quad i=1,\ldots,m \]
can serve as an estimate of the true initial target position $\bm{x}_1^\star$. Upon subtracting the $i$th equation from the $(i+1)$st, where $i=1,\ldots,m-1$, we get
\[ 2(\bm{a}_{i+1}-\bm{a}_i)^T\bm{x} \approx \|\bm{a}_{i+1}\|^2 - \|\bm{a}_i\|^2 + (r_i^1)^2 - (r_{i+1}^1)^2.\]
In particular, we can obtain an estimate of $\bm{x}_1^\star$ by solving
\begin{align} \label{eq:LS}
\min_{\bm{x} \in \mathbb{R}^n} \|\bm{A}\bm{x} - \bm{b}\|^2,
\end{align}
where $\bm{A}$ and $\bm{b}$ are given by~\eqref{eq:LS-A} and~\eqref{eq:LS-b}, respectively. Since the vectors $\{\bm{a}_{i}-\bm{a}_1\}_{i=2}^{m}$ span $\mathbb{R}^n$ by assumption, the solution to~\eqref{eq:LS} is readily given by~\eqref{eq:OLS-init}. It is worth noting that the OLS estimate in~\eqref{eq:OLS-init} can be computed simply by using the sensor positions $\{\bm{a}_i\}_{i=1}^m$ and noisy range measurements $\{r_i^1\}_{i=1}^m$. Thus, it is an attractive choice for initializing OGD and ONM. We use the step size $\eta_t = 0.1$ for $t=1,\ldots,T$ in OGD. Then, OGD generates the position estimates of the target via~\eqref{eq:toa-ogd}, while ONM generates those via 
\[
\bm{x}_t = \bm{x}_{t-1} - \left(\nabla^2f_t(\bm{x}_{t-1})\right)^{-1}\nabla f_t(\bm{x}_{t-1}), \quad t=1,\ldots,T.
\]
All computations were carried out in MATLAB on an Intel(R) Core(TM) i5-8600 CPU 3.10GHz CPU machine. The CTTE shown in the figures are averaged over 1000 Monte Carlo runs.

\subsection{Small Noise Level and Path Variation}
To begin, we construct the following set of test instances (cf.~\cite[Section IV]{LTS20}): The time horizon of interest $T$ is set to $500$. The target’s initial position is set to $\bm{x}_1^\star = 
\begin{bmatrix}
2 & 1
\end{bmatrix}^T
$ and its positions at subsequent time steps are given by
\begin{equation}\label{eq:source-update}
    \bm{x}_{t+1}^\star = \bm{x}_t^\star + \frac{0.005}{\sqrt{2(t+1)}} \bm{u}_t, \quad t=1,\ldots,T-1,
\end{equation}
where $\bm{u}_1,\ldots,\bm{u}_{T-1} \in \R^2$ are independently and uniformly distributed on the unit circle centered at the origin. We consider three scenarios, which correspond to three different noise levels: (i) $\sigma_t = 0.0001$ for $t=1,\ldots,T$; (ii) $\sigma_t = 0.01$ for $t=1,\ldots,T$; (iii) $\sigma_t = \tfrac{0.01}{\sqrt{t}}$ for $t=1,\ldots,T$. Figures~\ref{fig:error_sigma1e-4}--\ref{fig:error_sigma1e-2oversqrt} show the CTTE of OGD and ONM with exact and OLS initialization at these three noise levels. Figures~\ref{fig:traj_sigma1e-4}--\ref{fig:traj_sigma1e-2oversqrt} show the tracking trajectories generated by OGD and ONM for particular instances at those noise levels with OLS initialization. We also include the trajectories of the least-squares estimates $\{\hat{\bm x}_t\}_{t=1}^T$ in the figures for reference. These trajectories are generated using gradient descent (GD) at each time step. Specifically, at time $t$, we use the true target position $\bm{x}_t^\star$ as the initial point and perform the GD updates using the constant step size $1/m$ until either the norm of the gradient is smaller than $10^{-8}$ or the number of iterations reaches 5000. We then declare the last iterate to be $\hat{\bm x}_t$.

\begin{figure*}[!t]
\centering
\begin{subfigure}{.32\textwidth}
\centering
\includegraphics[width=1.08\textwidth]{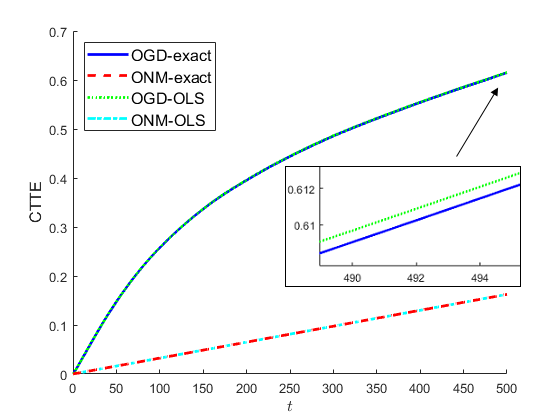}
\caption{$\sigma_t=0.0001$}
\label{fig:error_sigma1e-4}
\end{subfigure}
\begin{subfigure}{.32\textwidth}
\centering
\includegraphics[width=1.08\textwidth]{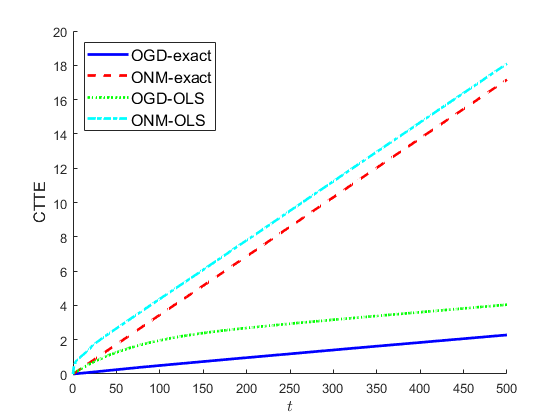}
\caption{$\sigma_t=0.01$}
\label{fig:error_sigma1e-2}
\end{subfigure}
\begin{subfigure}{.32\textwidth}
\centering
\includegraphics[width=1.08\textwidth]{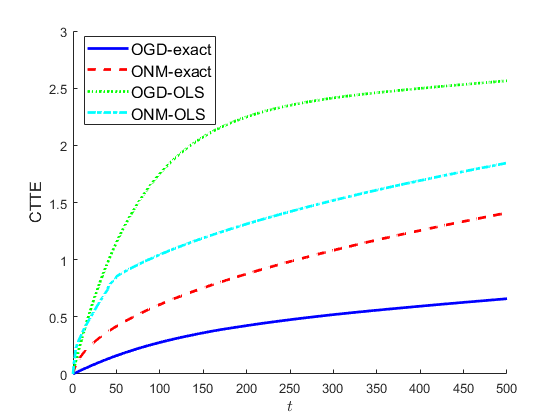}
\caption{$\sigma_t=0.01/\sqrt{t}$}
\label{fig:error_sigma1e-2oversqrt}
\end{subfigure}
\vfill
\begin{subfigure}{.32\textwidth}
\centering
\includegraphics[width=1.08\textwidth]{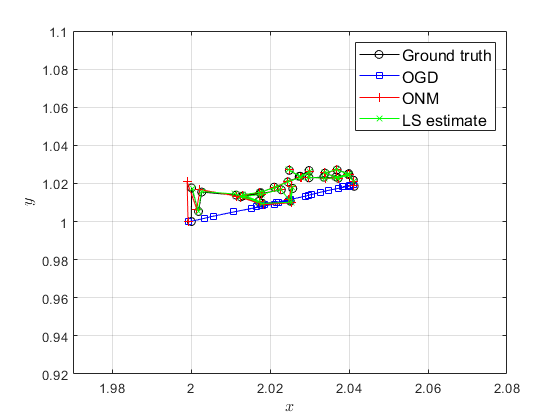}
\caption{$\sigma_t=0.0001$}
\label{fig:traj_sigma1e-4}
\end{subfigure}
\begin{subfigure}{.32\textwidth}
\centering
\includegraphics[width=1.08\textwidth]{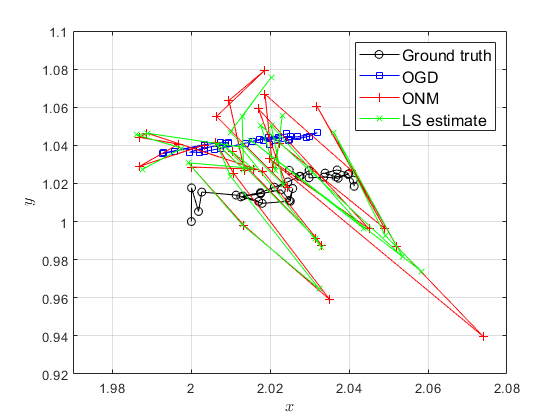}
\caption{$\sigma_t=0.01$}
\label{fig:traj_sigma1e-2}
\end{subfigure}
\begin{subfigure}{.32\textwidth}
\centering
\includegraphics[width=1.08\textwidth]{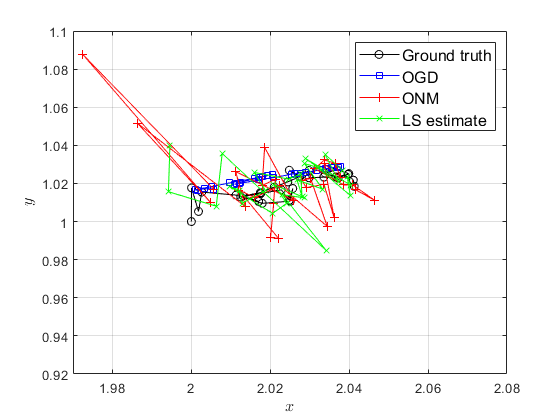}
\caption{$\sigma_t=0.01/\sqrt{t}$}
\label{fig:traj_sigma1e-2oversqrt}
\end{subfigure}
\caption{CTTE (top row) and tracking trajectories (bottom row) of OGD and ONM at different noise levels.}
\label{fig:OGDvsONM}
\end{figure*}

In the first scenario, the noise level is small compared to the path variation (i.e., $\sigma_{t+1}=0.0001$ vs. $v_t = \tfrac{0.005}{\sqrt{2(t+1)}}$ for $t=1,\ldots,T-1$ with $T=500$). We see from Figure~\ref{fig:error_sigma1e-4} that ONM has a smaller CTTE than OGD. This can be explained as follows: First, Proposition~\ref{thm:esterror} implies that the least-squares estimate $\hat{\bm x}_t$ is close to the true target position $\bm{x}_t^\star$ for $t=1,\ldots,T$. Second, since ONM uses both first- and second-order information of the loss function $f_t$, the point it generates is closer to $\hat{\bm x}_t$ than that generated by OGD. This suggests that ONM is better at tracking the least-squares estimates than OGD. In fact, these two claims are corroborated by our numerical results; see Figure~\ref{fig:traj_sigma1e-4}. 

In the second scenario, the noise level increases relative to the path variation (i.e., $\sigma_{t+1}=0.01$ vs. $v_t = \tfrac{0.005}{\sqrt{2(t+1)}}$ for $t=1,\ldots,T-1$ with $T=500$). Here, the ability of ONM to track the least-squares estimates closely becomes a liability, because Proposition~\ref{thm:esterror} suggests that the true target position and the least-squares estimate will be further apart. Indeed, as shown in Figure~\ref{fig:error_sigma1e-2}, ONM has a larger CTTE than OGD, and the gap widens as time goes by. We see from Figure~\ref{fig:traj_sigma1e-2} that ONM is much better at tracking the least-squares estimates than OGD. However, the least-squares estimates are quite far from the true target positions, and OGD is better at tracking the latter.

We note that in the above two scenarios, the noise level is constant, and the CTTE of OGD eventually grows linearly (see Figures~\ref{fig:error_sigma1e-4} and~\ref{fig:error_sigma1e-2}). This is consistent with the result in Theorem~\ref{thm:ogd-ctte}, as $N_1(T)=\Theta(T)$ and $N_2(T)=\Theta(T)$ and both terms dominate $V(T)=\Theta(\sqrt{T})$.

In the third scenario, the noise level diminishes as time goes by, but the relative magnitude between noise level and path variation stays roughly constant (i.e., $\sigma_{t+1}=\tfrac{0.01}{\sqrt{t}}$ vs. $v_t = \tfrac{0.005}{\sqrt{2(t+1)}}$ for $t=1,\ldots,T-1$ with $T=500$). From Figure~\ref{fig:error_sigma1e-2oversqrt}, we see that with exact initialization, OGD has a smaller CTTE than ONM. This suggests that the high initial noise level, which causes the least-squares estimate to deviate from the true target position, throws off ONM and degrades its subsequent tracking performace even though the noise level is diminishing. Moreover, given the high initial noise level, the OLS initialization strategy tends to produce an inaccurate estimate of the true initial target position. Consequently, with OLS initialization, the CTTE of both OGD and ONM grow rapidly in the beginning, though the former is more affected by the quality of the OLS estimate than the latter. Nevertheless, we observe that the CTTE gap between OGD and ONM narrows as time goes by. This supports our earlier claim that OGD is better at tracking the true target positions than ONM; see also Figure~\ref{fig:traj_sigma1e-2oversqrt}. Lastly, we note that the CTTE of OGD grows sublinearly. This is consistent with the result in Theorem~\ref{thm:ogd-ctte}, as we have $V(T) = \Theta(\sqrt{T})$, $N_1(T) = \Theta(\sqrt{T})$, and $N_2(T)=\Theta(\log T)$. 

We also compare the per-iteration CPU time of OGD and ONM. As can be seen in Table~\ref{tab:CPUtime}, OGD is about 2-3 times faster than ONM. The higher runtime of the latter can be attributed to the computation of the inverse of the Hessian of the loss function.

\begin{table}[htb]
\fontsize{10}{12.5}\selectfont
	\centering
\begin{tabular}{c|c|c}
Noise Level & OGD & ONM\\
\hline
$\sigma_t=0.0001$ & $5.23\times10^{-6}$s & $1.36\times10^{-5}$s\\
$\sigma_t=0.01$ & $5.11\times10^{-6}$s & $1.31\times10^{-5}$s\\
$\sigma_t=0.01/\sqrt{t}$ & $5.05\times10^{-6}$s & $1.29\times10^{-5}$s
\end{tabular}
\caption{Per-iteration CPU time of OGD and ONM.}
\label{tab:CPUtime}
\end{table}

To better understand the effect of the relative magnitude between noise level and path variation on the tracking performance of OGD and ONM, let us plot Figure~\ref{fig:error_sigma1e-4} again but with the longer time horizon $T=10000$. The result is shown in Figure~\ref{fig:CTTE}. Although the CTTE of OGD is higher than that of ONM in the beginning, the latter eventually overtakes the former as $t$ increases. This is consistent with our earlier observation that ONM is better at tracking the least-squares estimates than OGD. Indeed, when $t$ is sufficiently large, the noise level $\sigma_{t+1}=0.0001$ is larger than the path variation $v_t = \tfrac{0.005}{\sqrt{2(t+1)}}$. Thus, as time goes by, the true target position and the least-squares estimate become further apart (see Proposition~\ref{thm:esterror}), and ONM starts to incur a higher target tracking error at each time step. This suggests that the performance of ONM is rather sensitive to the noise level, while that of OGD is quite stable.

\begin{figure}
    \centering
    \includegraphics[scale=0.48]{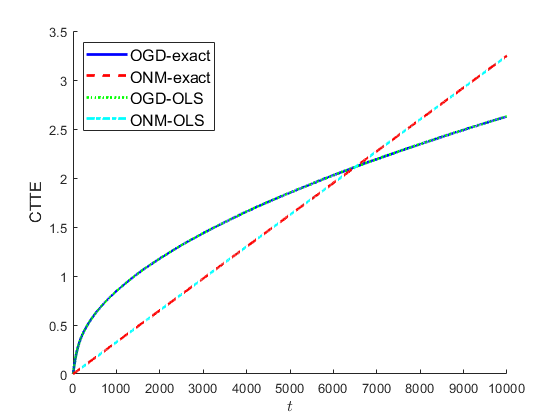}
    \caption{CTTE of OGD and ONM at noise level $\sigma_t=0.0001$, $T=10000$.}
    \label{fig:CTTE}
\end{figure}

As a further illustration, we construct another set of test instances with $T=10000$, the same initial target position $\bm{x}_1^\star = 
\begin{bmatrix}
2 & 1
\end{bmatrix}^T
$ and target trajectory~\eqref{eq:source-update} as before, and the following two different noise levels: (i) $\sigma_t = \frac{0.005}{\sqrt{2t}}$ for $t=1,\ldots,T$; (ii) $\sigma_t = \frac{0.008}{\sqrt{2t}}$ for $t=1,\ldots,T$. For $t=1,\ldots,T-1$, the ratios of noise level $\sigma_{t+1}$ to path variation $v_t$ in these two cases are 1 and 1.6, respectively. Figures~\ref{fig:rand_noise-same-variation}--\ref{fig:rand_noise-1pt6-variation} show the CTTE of OGD and ONM with exact and OLS initialization at these two noise levels.

\begin{figure*}[htb]
\centering
\begin{subfigure}{.42\textwidth}
\centering
\includegraphics[width=0.92\textwidth]{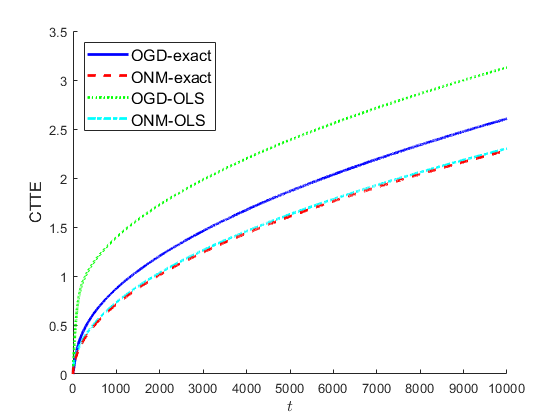}
\caption{$\bm{x}_{t+1}^\star = \bm{x}_t^\star + \frac{0.005}{\sqrt{2(t+1)}} \bm{u}_t,~\sigma_t = \frac{0.005}{\sqrt{2t}}$}
\label{fig:rand_noise-same-variation}
\end{subfigure}
\begin{subfigure}{.42\textwidth}
\centering
\includegraphics[width=0.92\textwidth]{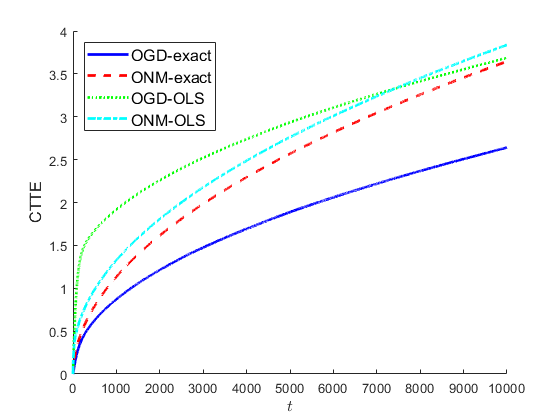}
\caption{$\bm{x}_{t+1}^\star = \bm{x}_t^\star + \frac{0.005}{\sqrt{2(t+1)}} \bm{u}_t,~\sigma_t = \frac{0.008}{\sqrt{2t}}$}
\label{fig:rand_noise-1pt6-variation}
\end{subfigure}
\caption{CTTE of OGD and ONM when applied to different target trajectories and noise levels.}
\label{fig:region_OGD}
\end{figure*}

When the noise level to path variation ratio is 1, Figure~\ref{fig:rand_noise-same-variation} shows that ONM performs better than OGD, regardless of whether exact or OLS initialization is used. However, when the ratio increases to $1.6$, Figure~\ref{fig:rand_noise-1pt6-variation} shows that OGD eventually performs better than ONM, regardless of whether exact or OLS initialization is used. These results corroborate our earlier account that OGD is better at tracking the true target positions, while ONM is better at tracking the least-squares estimates.

\subsection{Large Noise Level and Path Variation}
Next, we study the CTTE of OGD and ONM when the two methods are applied to test instances that violate one or more of the conditions~\eqref{eq:dist-asp-dyn},~\eqref{eq:eps-dyn}, and~\eqref{eq:eps-rad}. In particular, there is no guarantee that the iterate generated by OGD at the current time step lies in the strong convexity region of the loss function at the next time step. 

We first construct a test instance that has large noise level and path variation but the ratio between them is small. The time horizon of interest is set to $T=500$. The target’s initial position is set to $\bm{x}_1^\star = 
\begin{bmatrix}
2 & 1
\end{bmatrix}^T
$
and its subsequent positions are given by
\[
\bm{x}_{t+1}^\star = \bm{x}_t^\star+\frac{0.1}{\sqrt{2(t+1)}}\bm{u}_t,\quad t = 1,\ldots,T-1.
\]
Here, as before, $\bm{u}_1,\ldots,\bm{u}_{T-1} \in \R^2$ are independently and uniformly distributed on the unit circle centered at the origin. The noise levels are given by $\sigma_t=\frac{0.1}{\sqrt{2t}}$ for $t = 1,\ldots,T$. Figure~\ref{fig:rand_noise-var_1e-1oversqrt} shows the CTTE of OGD and ONM. We observe that the CTTE of OGD is much lower than that of ONM with both exact and OLS initialization. One possible explanation is that the good performance of ONM relies heavily on the local strong convexity of the loss function, and the lack of such a property seriously affects its performance.

Now, let us construct a test instance that has a small noise level but large path variation, so that the ratio between them is small. The time horizon of interest and the target's initial position are the same as before. The target trajectory is given by
\[
\bm{x}_{t+1}^\star = \bm{x}_t^\star+\frac{0.5}{\sqrt{2(t+1)}}\bm{u}_t,\quad t = 1,\ldots,T-1,
\]
while the noise levels are given by $\sigma_t=\frac{0.001}{\sqrt{2t}}$ for $t = 1,\ldots,T$. Figure~\ref{fig:rand_noise_1e-3oversqrt-var_5e-1oversqrt} shows the CTTE of OGD and ONM. We see that the CTTE of OGD is much lower than that of ONM. In fact, when the iterates are no longer guaranteed to lie in the strong convexity regions of the loss functions, ONM becomes rather unstable regardless of the noise level to path variation ratio. This supports our earlier explanation that the local strong convexity of the loss function is crucial to the good performance of ONM. By contrast, OGD is much more robust and can better track the true target positions even when the conditions for local strong convexity are violated.

\begin{figure*}[!t]
\centering
\begin{subfigure}{.42\textwidth}
\centering
\includegraphics[width=0.92\textwidth]{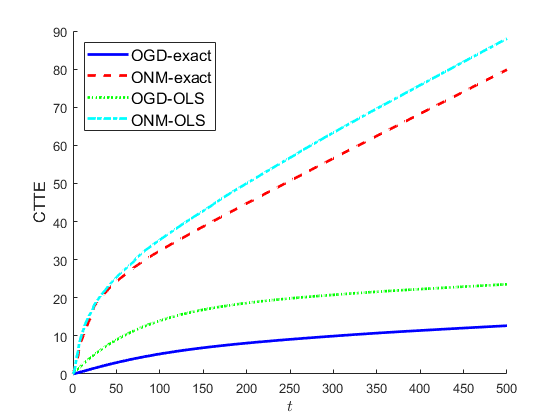}
\caption{$\bm{x}_{t+1}^\star = \bm{x}_t^\star+\frac{0.1}{\sqrt{2(t+1)}}\bm{u}_t,~\sigma_t=\frac{0.1}{\sqrt{2t}}$}
\label{fig:rand_noise-var_1e-1oversqrt}
\end{subfigure}
\begin{subfigure}{.42\textwidth}
\centering
\includegraphics[width=0.92\textwidth]{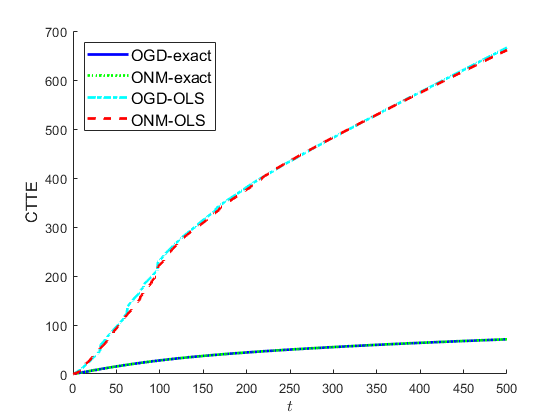}
\caption{$\bm{x}_{t+1}^\star = \bm{x}_t^\star+\frac{0.5}{\sqrt{2(t+1)}}\bm{u}_t,~\sigma_t=\frac{0.001}{\sqrt{2t}}$}
\label{fig:rand_noise_1e-3oversqrt-var_5e-1oversqrt}
\end{subfigure}
\caption{CTTE of OGD and ONM with large noise level and/or path variation.}
\label{fig:large_var&noise}
\end{figure*}

\section{Conclusion} \label{sec:concl}

In this paper, we established the first non-trivial performance bound for OGD when it is applied to a time-varying non-convex least-squares formulation of the TOA-based tracking problem. The performance metric we adopted is the CTTE, which measures the cumulative discrepancy between the trajectory of position estimates and that of the true target. To establish the said performance bound, we developed new results on the estimation and geometric properties of the classic static TOA-based source localization problem, which can be of independent interest. Our numerical results corroborate the theoretical findings and show that OGD can effectively track the target at different noise levels.

A possible future direction is to design and analyze online methods for TDOA-based tracking, which corresponds to a sequential version of the TDOA-based source localization problem (see, e.g.,~\cite{LCS09} and the references therein). One possible approach is to combine the results in~\cite{LPS17} with the techniques developed in this paper. Another future direction is to study the performance of different online methods for solving the TOA-based tracking problem.

\appendix
\subsection{Proof of Proposition~\ref{thm:esterror}} \label{app:esterror}
Following the development in Section~\ref{sec:sim}, the OLS estimate of the true target position $\bm{x}^\star$ is given by
\[ \bm{x}_{\rm OLS} = (\bm{A}^T\bm{A})^{-1}\bm{A}^T\bm{b}, \]
where $\bm{A}$ is given in~\eqref{eq:LS-A} and 
\begin{equation} \label{eq:OLS-b}
\bm{b} := 
\frac{1}{2}\begin{bmatrix}
\|\bm{a}_2\|^2 - \|\bm{a}_1\|^2 + r_1^2 - r_2^2 \\
\vdots\\
\|\bm{a}_m\|^2 - \|\bm{a}_{m-1}\|^2 + r_{m-1}^2 - r_m^2
\end{bmatrix}.
\end{equation}

Now, let $r_i^{\star} = \|\bm{x}^\star - \bm{a}_i\|$ for $i=1,\ldots,m$ and let $\bm{b}^{\star}$ be the vector obtained by replacing $r_i$ with $r_i^{\star}$ in~\eqref{eq:OLS-b}. Then, the derivation in Section~\ref{sec:sim} shows that $\bm{x}^\star$ satisfies $\bm{x}^\star = (\bm{A}^T\bm{A})^{-1}\bm{A}^T\bm{b}^\star$. This implies that
\begin{align*}
\|\bm{x}_{\rm OLS} - \bm{x}^{\star}\| = \|(\bm{A}^T\bm{A})^{-1}\bm{A}^T(\bm{b}-\bm{b}^{\star})\|.
\end{align*}
Using the fact that $r_i = r_i^\star + w_i$ for $i=1,\ldots,m$, we get $r_i^2-(r_i^\star)^2 = 2r_i^\star w_i + w_i^2$ and hence
\[ 
\| \bm{b} - \bm{b}^\star \| \le C_0 \|\bm{w}\| + \frac{1}{2} \|\tilde{\bm w}\|
\] 
for some constant $C_0>0$, where
\[ \tilde{\bm w} := 
\begin{bmatrix}
w_1^2 - w_2^2\\
w_2^2 - w_3^2\\
\vdots\\
w_{m-1}^2 - w_m^2
\end{bmatrix}. \]
Since $\|\tilde{\bm w}\| \le \|\tilde{\bm w}\|_1\le 2\|\bm{w}\|^2$, our assumption on $\|\bm{w}\|$ yields
\begin{align} \label{eq:LS-star}
\|\bm{x}_{\rm OLS} - \bm{x}^{\star}\| \leq C_1\sqrt{m}\sigma + C_2m\sigma^2
\end{align}
for some constants $C_1,C_2>0$.

Next, let $\hat{r}_i = \|\hat{\bm{x}} - \bm{a}_i\|$ for $i=1,\ldots,m$ and let $\hat{\bm{b}}$ be the vector obtained by replacing $r_i$ with $\hat{r}_i$ in~\eqref{eq:OLS-b}. By repeating the same argument as above and noting that
\begin{align*}
r_i^2 - \hat{r}_i^2 &= 2r_i(r_i-\hat{r}_i) - (r_i-\hat{r}_i)^2 \\
&= 2(r_i^\star + w_i)(r_i-\hat{r}_i) - (r_i-\hat{r}_i)^2, \\
f(\hat{\bm x}) &= \sum_{i=1}^m (\hat{r}_i - r_i)^2 \le f(\bm{x}^\star) = \|\bm{w}\|^2 \le c_0^2m\sigma^2, \\
|w_i| &\ll r_i^\star,
\end{align*}
we have
\begin{align*}
\|\bm{b} - \hat{\bm{b}}\| &\le C_3 \sqrt{f(\hat{\bm x})} + \frac{1}{2}\|\tilde{\bm r}\| \le C_3 \sqrt{f(\hat{\bm x})} + C_4 f(\hat{\bm x}) \\
&\le C_5 \sqrt{m}\sigma + C_6 m\sigma^2
\end{align*}
for some constants $C_3,C_4,C_5,C_6>0$, where
\[ \tilde{\bm r} := \begin{bmatrix}
(r_2-\hat{r}_2)^2 - (r_1-\hat{r}_1)^2 \\
(r_3-\hat{r}_3)^2 - (r_2-\hat{r}_2)^2 \\
\vdots\\
(r_m-\hat{r}_m)^2 - (r_{m-1}-\hat{r}_{m-1})^2
\end{bmatrix}.
\]
This gives
\begin{align}\label{eq:LS-hat}
\|{\bm{x}_{\rm OLS}}-\hat{\bm{x}}\| \le C_7 \sqrt{m}\sigma+C_8m\sigma^2 
\end{align}
for some constants $C_7,C_8>0$. The desired result then follows by applying the triangle inequality to~\eqref{eq:LS-star} and~\eqref{eq:LS-hat}. 

\subsection{Proof of Theorem~\ref{thm:str_cvx}} \label{app:str_cvx}
We begin with two technical lemmas.
\begin{lemma}\label{lem:eig-diff-univec}
Let $\bm{u},\bm{v}\in\mathbb{R}^n$ be two linearly independent vectors. Then,
\begin{equation} \label{eq:mineig}
\lambda_{\min}(\bm{u}\bm{u}^T - \bm{v}\bm{v}^T) = \frac{\|\bm{u}\|^2-\|\bm{v}\|^2-\|\bm{u}-\bm{v}\|\|\bm{u}+\bm{v}\|}{2}.
\end{equation}
\end{lemma}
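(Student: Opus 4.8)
The plan is to reduce the computation to a $2\times 2$ eigenvalue problem. Set $\bm{A} := \bm{u}\bm{u}^T - \bm{v}\bm{v}^T$ and $\bm{M} := [\,\bm{u}\ \ \bm{v}\,] \in \mathbb{R}^{n\times 2}$, so that $\bm{A} = \bm{M}\bm{D}\bm{M}^T$ with $\bm{D} := \mathrm{diag}(1,-1)$. Since $\bm{u}$ and $\bm{v}$ are linearly independent, $\bm{A}$ has rank $2$ and its range lies in $\mathrm{span}\{\bm{u},\bm{v}\}$; in particular $\bm{A}$ annihilates the orthogonal complement of this subspace, so all of its eigenvalues except (at most) two are zero, and the whole problem reduces to pinning down the two nonzero ones.

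First I would invoke the standard fact that for any conformable matrices $\bm{P},\bm{Q}$ the products $\bm{P}\bm{Q}$ and $\bm{Q}\bm{P}$ have the same nonzero eigenvalues. Applying this with $\bm{P}=\bm{M}$ and $\bm{Q}=\bm{D}\bm{M}^T$ shows that the nonzero eigenvalues of $\bm{A}=\bm{M}(\bm{D}\bm{M}^T)$ coincide with those of the $2\times 2$ matrix $\bm{D}\bm{M}^T\bm{M}=\bm{D}\bm{G}$, where $\bm{G}:=\bm{M}^T\bm{M}=\begin{bmatrix}\|\bm{u}\|^2 & \bm{u}^T\bm{v}\\ \bm{u}^T\bm{v} & \|\bm{v}\|^2\end{bmatrix}$ is the Gram matrix. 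A direct computation gives $\mathrm{tr}(\bm{D}\bm{G})=\|\bm{u}\|^2-\|\bm{v}\|^2$ and $\det(\bm{D}\bm{G})=-\big(\|\bm{u}\|^2\|\bm{v}\|^2-(\bm{u}^T\bm{v})^2\big)$, so the two nonzero eigenvalues are the roots of $\lambda^2-(\|\bm{u}\|^2-\|\bm{v}\|^2)\lambda-(\|\bm{u}\|^2\|\bm{v}\|^2-(\bm{u}^T\bm{v})^2)=0$.

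It then remains to (i) argue that $\lambda_{\min}(\bm{A})$ is the smaller root of this quadratic and (ii) simplify that root to the asserted closed form. For (i), linear independence of $\bm{u},\bm{v}$ yields the strict Cauchy--Schwarz inequality $\|\bm{u}\|^2\|\bm{v}\|^2-(\bm{u}^T\bm{v})^2>0$, so $\det(\bm{D}\bm{G})<0$; hence the two roots have opposite signs, the negative one dominates the zero eigenvalues, and it is exactly $\lambda_{\min}(\bm{A})$, namely the root with the minus sign in the quadratic formula. For (ii), I would rewrite the discriminant using the polarization identities $\|\bm{u}\pm\bm{v}\|^2=\|\bm{u}\|^2\pm 2\bm{u}^T\bm{v}+\|\bm{v}\|^2$, noting that $(\|\bm{u}\|^2-\|\bm{v}\|^2)^2+4(\|\bm{u}\|^2\|\bm{v}\|^2-(\bm{u}^T\bm{v})^2)=(\|\bm{u}\|^2+\|\bm{v}\|^2)^2-4(\bm{u}^T\bm{v})^2=\|\bm{u}-\bm{v}\|^2\,\|\bm{u}+\bm{v}\|^2$; taking square roots and substituting into the quadratic formula yields~\eqref{eq:mineig}.

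The only genuinely delicate point---rather than routine algebra---is step (i): one must make sure that $\lambda_{\min}$ refers to the negative eigenvalue of the reduced $2\times 2$ matrix and not to one of the zero eigenvalues that $\bm{A}$ carries when $n>2$. This is precisely where the linear-independence hypothesis is used, through the strict Cauchy--Schwarz inequality that forces $\det(\bm{D}\bm{G})<0$ and hence the existence of a strictly negative eigenvalue. Everything else is bookkeeping with traces, determinants, and the polarization identities.
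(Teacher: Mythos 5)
Your proof is correct, and it takes a genuinely different route from the paper's. You factor $\bm{u}\bm{u}^T-\bm{v}\bm{v}^T=\bm{M}\bm{D}\bm{M}^T$ and invoke the fact that $\bm{P}\bm{Q}$ and $\bm{Q}\bm{P}$ share their nonzero spectrum, reducing everything to the characteristic polynomial of the $2\times 2$ matrix $\bm{D}\bm{G}$; the paper instead posits an eigenvector $\bm{w}=a\bm{u}+b\bm{v}$, splits into the cases $\bm{u}^T\bm{v}=0$ and $\bm{u}^T\bm{v}\neq 0$, argues separately that $a$ and $b$ are both nonzero (using linear independence), and solves a quadratic in the ratio $a/b$ to extract the eigenvalue. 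The two arguments land on the same quadratic $\lambda^2-(\|\bm{u}\|^2-\|\bm{v}\|^2)\lambda-(\|\bm{u}\|^2\|\bm{v}\|^2-(\bm{u}^T\bm{v})^2)=0$ and the same discriminant identity $(\|\bm{u}\|^2+\|\bm{v}\|^2)^2-4(\bm{u}^T\bm{v})^2=\|\bm{u}-\bm{v}\|^2\|\bm{u}+\bm{v}\|^2$. What your route buys is the elimination of the case analysis and the nonvanishing-coefficient argument: linear independence enters only once, through strict Cauchy--Schwarz forcing $\det(\bm{D}\bm{G})<0$, which simultaneously guarantees a strictly negative root and identifies it as $\lambda_{\min}$ even in the presence of the $n-2$ zero eigenvalues. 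What the paper's route buys is self-containedness---it uses nothing beyond direct manipulation of the eigenvalue equation. You also correctly flag the one delicate point (that the negative root, not a zero eigenvalue, is the minimum), which the paper handles only implicitly.
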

\begin{proof}
Let $(\lambda,\bm{w}) \in \R\times\R^n$ be an eigenpair of $\bm{u}\bm{u}^T-\bm{v}\bm{v}^T$; i.e., $\|\bm{w}\|=1$ and
\[
(\bm{u}\bm{u}^T-\bm{v}\bm{v}^T)\bm{w} = \lambda\bm{w}.
\]
If $\bm{w}\in\mbox{span}\{\bm{u},\bm{v}\}^{\perp}$, then $\lambda = 0$. Otherwise, we can write $\bm{w} = a\bm{u}+b\bm{v}$ for some $a,b\in\R$ and compute
\begin{align}
&~ (\bm{u}\bm{u}^T-\bm{v}\bm{v}^T)(a\bm{u}+b\bm{v}) \nonumber\\
=&~ (a\|\bm{u}\|^2+b(\bm{u}^T\bm{v}))\bm{u} - (a(\bm{u}^T\bm{v})+b\|\bm{v}\|^2)\bm{v} \nonumber\\
=&~ \lambda a\bm{u}+\lambda b\bm{v}. \label{eq:eigpair}
\end{align}
Consider the following cases:

\smallskip
\noindent{\it Case 1}: $\bm{u}^T\bm{v}=0$.

\noindent It is immediate that $\lambda_{\min}(\bm{u}\bm{u}^T - \bm{v}\bm{v}^T) = -\|\bm{v}\|^2$ and the corresponding eigenvector is $\bm{w} = \bm{v}/\|\bm{v}\|$. Since $\|\bm{u}-\bm{v}\|^2 = \|\bm{u}+\bm{v}\|^2 = \|\bm{u}\|^2 + \|\bm{v}\|^2$, we obtain~\eqref{eq:mineig}.

\smallskip
\noindent{\it Case 2}: $\bm{u}^T\bm{v}\not=0$.

\noindent We claim that both $a$ and $b$ must be non-zero. Indeed, suppose to the contrary that $a=0$. Since $\bm{w}\not=\bm{0}$, we have $b\not=0$. It follows from~\eqref{eq:eigpair} that
\[ (\bm{u}^T\bm{v})\bm{u} = (\lambda + \|\bm{v}\|^2)\bm{v}. \]
As $\bm{u}^T\bm{v}\not=0$, we also have $\lambda\not=-\|\bm{v}\|^2$. However, the above relation contradicts the linear independence of $\bm{u}$ and $\bm{v}$. Thus, we conclude that $a\not=0$. A similar argument shows that $b\not=0$.

%
%
%

Now, by equating terms in~\eqref{eq:eigpair}, we have
\begin{align}\label{eq:eig}
\lambda = \frac{a\|\bm{u}\|^2 + b(\bm{u}^T\bm{v})}{a} = \frac{-a(\bm{u}^T\bm{v})- b\|\bm{v}\|^2}{b}.
\end{align}
Solving the quadratic equation
\[ 
(\bm{u}^T\bm{v})a^2+(\|\bm{u}\|^2+\|\bm{v}\|^2)ab +(\bm{u}^T\bm{v})b^2 = 0,
\] 
we obtain a relationship between $a$ and $b$. Plugging this relationship into~\eqref{eq:eig} yields
\begin{align*}
\lambda &= \frac{(\|\bm{u}\|^2+\|\bm{v}\|^2)\pm\sqrt{(\|\bm{u}\|^2+\|\bm{v}\|^2)^2-4(\bm{u}^T\bm{v})^2}}{2} - \|\bm{v}\|^2 \nonumber\\
&= \frac{\|\bm{u}\|^2-\|\bm{v}\|^2\pm\|\bm{u}-\bm{v}\|\|\bm{u}+\bm{v}\|}{2}.
\end{align*}
The negative root gives a non-positive eigenvalue by the Cauchy-Schwarz inequality. This establishes~\eqref{eq:mineig}.
\end{proof}

\begin{lemma}\label{lem:diff_unitvec}
For any $\bm{x},\bm{y}\in\mathbb{R}^n\setminus\{\bm{0}\}$, we have
\[
\left\|\frac{\bm{x}}{\|\bm{x}\|} - \frac{\bm{y}}{\|\bm{y}\|}\right\| \leq\frac{\|\bm{x}-\bm{y}\|}{\min\{\|\bm{x}\|,\|\bm{y}\|\}}.
\]
\end{lemma}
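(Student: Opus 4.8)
The plan is to exploit the symmetry of the inequality and then reduce the problem to a one-dimensional monotonicity argument. Since both sides of the claimed bound are unchanged under interchanging $\bm{x}$ and $\bm{y}$, I would first assume without loss of generality that $\|\bm{x}\| \le \|\bm{y}\|$, so that $\min\{\|\bm{x}\|, \|\bm{y}\|\} = \|\bm{x}\|$. Multiplying through by $\|\bm{x}\|$ and using the identity $\|\bm{x}\| \cdot \frac{\bm{y}}{\|\bm{y}\|} = \frac{\|\bm{x}\|}{\|\bm{y}\|} \bm{y}$, the target inequality becomes
\[
\left\| \bm{x} - \frac{\|\bm{x}\|}{\|\bm{y}\|} \bm{y} \right\| \le \|\bm{x} - \bm{y}\|.
\]

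The key step is to introduce the scalar parameter $t := \|\bm{x}\|/\|\bm{y}\| \in (0,1]$ and study the quadratic function $g(s) := \|\bm{x} - s\bm{y}\|^2 = \|\bm{x}\|^2 - 2s(\bm{x}^T\bm{y}) + s^2\|\bm{y}\|^2$. This is a convex parabola in $s$ with vertex at $s^\star = (\bm{x}^T\bm{y})/\|\bm{y}\|^2$. By the Cauchy-Schwarz inequality, $\bm{x}^T\bm{y} \le \|\bm{x}\|\|\bm{y}\|$, whence $s^\star \le \|\bm{x}\|/\|\bm{y}\| = t \le 1$. Since $g$ is non-decreasing on $[s^\star, \infty)$ and both $t$ and $1$ lie in this interval with $t \le 1$, I can conclude $g(t) \le g(1)$, which is exactly the reduced inequality after taking square roots. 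Observe that the reduced form above is simply $g(t) \le g(1)$, so the entire argument collapses to comparing two values of the parabola $g$ on the same side of its vertex.

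The main obstacle to watch for is securing the sharp constant. A direct application of the triangle inequality---splitting $\frac{\bm{x}}{\|\bm{x}\|} - \frac{\bm{y}}{\|\bm{y}\|}$ by adding and subtracting $\frac{\bm{y}}{\|\bm{x}\|}$ and then invoking the reverse triangle inequality to control $\bigl|\,\|\bm{x}\| - \|\bm{y}\|\,\bigr|$---yields only the weaker estimate $\frac{2\|\bm{x} - \bm{y}\|}{\|\bm{x}\|}$, carrying an extraneous factor of $2$. The monotonicity argument on $g$ is precisely what removes this factor and delivers the constant $1$ asserted in the lemma. As a sanity check, equality holds when $\bm{x}$ and $\bm{y}$ are orthogonal unit vectors, which confirms that the constant cannot be improved and that the sharper argument is genuinely needed.
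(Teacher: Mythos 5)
Your proof is correct and follows essentially the same route as the paper's: both reduce the claim (after assuming $\|\bm{x}\|\le\|\bm{y}\|$) to showing $\|\bm{x}-\alpha\bm{y}\|\le\|\bm{x}-\bm{y}\|$ for $\alpha=\|\bm{x}\|/\|\bm{y}\|\in(0,1]$ and close the argument with the Cauchy--Schwarz bound on $\bm{x}^T\bm{y}$. Your framing via monotonicity of the parabola $s\mapsto\|\bm{x}-s\bm{y}\|^2$ past its vertex is just a cleaner packaging of the paper's explicit factorization of $\|\bm{x}-\alpha\bm{y}\|^2-\|\bm{x}-\bm{y}\|^2$.
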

\begin{proof}
If $\|\bm{x}\| = \|\bm{y}\|$, then the inequality trivially holds as equality. Hence, we may assume without loss of generality that $\|\bm{x}\| < \|\bm{y}\|$. Consider
\begin{align*}
\left\|\frac{\bm{x}}{\|\bm{x}\|} - \frac{\bm{y}}{\|\bm{y}\|}\right\|^2 = \frac{1}{\|\bm{x}\|^2}\left\|\bm{x} - \frac{\|\bm{x}\|}{\|\bm{y}\|}\bm{y}\right\|^2.
\end{align*}
Writing $\alpha = \|\bm{x}\|/\|\bm{y}\|$, where $0<\alpha<1$ because $\|\bm{x}\| < \|\bm{y}\|$, we see that
\begin{align*}
&~ \|\bm{x} - \alpha\bm{y}\|^2 \\
=&~ \|\bm{x}\|^2 - 2\bm{x}^T\bm{y} + \|\bm{y}\|^2 + 2(1-\alpha)\bm{x}^T\bm{y} + (\alpha^2 - 1)\|\bm{y}\|^2\\
=&~ \|\bm{x}-\bm{y}\|^2 +(1-\alpha)(2\bm{x}^T\bm{y} - (\alpha+1)\|\bm{y}\|^2)
\end{align*}
and
\begin{align*}
2\bm{x}^T\bm{y} - (\alpha+1)\|\bm{y}\|^2 &\leq 2\|\bm{x}\|\|\bm{y}\| - (\|\bm{x}\|+\|\bm{y}\|)\|\bm{y}\|\\
&= \|\bm{y}\|(\|\bm{x}\|-\|\bm{y}\|)<0.
\end{align*}
This completes the proof.
\end{proof}

Armed with Lemmas~\ref{lem:eig-diff-univec} and~\ref{lem:diff_unitvec}, we are now ready to prove Theorem~\ref{thm:str_cvx}. By Proposition~\ref{thm:esterror} and the assumption of Theorem~\ref{thm:str_cvx}, we have
\begin{equation} \label{eq:hat-anchor}
\| \hat{\bm{x}} - \bm{a}_i \| \ge \| \bm{x}^\star - \bm{a}_i \| - \| \hat{\bm{x}} - \bm{x}^\star \|  > \delta, \quad i=1,\ldots,m. 
\end{equation}
Thus, the loss function $f$ is twice continuously differentiable at $\hat{\bm x}$ with
\begin{align*}
\nabla^2f(\hat{\bm x}) = 2\sum_{i=1}^m\Bigg\{&\frac{r_i}{\|\hat{\bm x}-\bm{a}_i\|^3}(\hat{\bm x} - \bm{a}_i)(\hat{\bm x} - \bm{a}_i)^T \nonumber\\
&+\left(1 - \frac{r_i}{\|\hat{\bm x}-\bm{a}_i\|}\right)\bm{I}\Bigg\}.
\end{align*}
Our goal is to prove that $\nabla^2f(\hat{\bm{x}}+\bm{\epsilon})\succ\bm{0}$ for all $\bm{\epsilon}$ within some ball. In particular, this would imply that $\nabla^2f(\hat{\bm{x}})\succ\bm{0}$. To begin, consider a fixed $i \in \{1,\ldots,m\}$. Since $\Lambda/m \le 1$, we have $\kappa < \delta/10$. This, together with Proposition~\ref{thm:esterror} and the assumption that $\|\bm{\epsilon}\| \le \kappa$, gives
\begin{align*}
\| \hat{\bm x} + \bm{\epsilon} - \bm{a}_i \| &\le \| \hat{\bm x} - \bm{x}^\star \| + \| \bm{x}^\star - \bm{a}_i \| + \| \bm{\epsilon} \| \\
&\le \| \bm{x}^\star - \bm{a}_i \| + K_1\sqrt{m}\sigma + K_2m\sigma^2 + \frac{\delta}{10}.
\end{align*}
Moreover, since $|w_i| \ll \|\bm{x}^\star-\bm{a}_i\|$, we may take $r_i \ge \|\bm{x}^\star-\bm{a}_i\|/2$. Putting these together and using the assumption of Theorem~\ref{thm:str_cvx}, we obtain
\begin{align*}
&~ \frac{r_i}{\| \hat{\bm x} + \bm{\epsilon} - \bm{a}_i \|} \nonumber \\
\ge& ~ \frac{1/2}{ 1 + \|\bm{x}^\star - \bm{a}_i\|^{-1} \left( K_1\sqrt{m}\sigma + K_2m\sigma^2 + \delta/10 \right) }\nonumber \\
\ge& ~ \frac{1}{4}.
\end{align*}
Hence, we can bound
\begin{align}
&~\lambda_{\min}(\nabla^2f(\hat{\bm{x}}+\bm{\epsilon})) \nonumber\\
\noalign{\smallskip}
\geq&~ 2 \cdot \lambda_{\min}\left(\sum_{i=1}^m\frac{r_i}{\|\hat{\bm{x}}+\bm{\epsilon}-\bm{a}_i\|^3}(\hat{\bm{x}}+\bm{\epsilon}-\bm{a}_i)(\hat{\bm{x}}+\bm{\epsilon}-\bm{a}_i)^T\right) \nonumber\\
\noalign{\smallskip}
&~+ 2\sum_{i=1}^m\left(1-\frac{r_i}{\|\hat{\bm{x}}+\bm{\epsilon}-\bm{a}_i\|}\right) \nonumber \\
\noalign{\smallskip}
\ge&~ \frac{1}{2}\cdot \lambda_{\min}\left( \sum_{i=1}^m \left( \left( \frac{\hat{\bm{x}}+\bm{\epsilon}-\bm{a}_i}{\|\hat{\bm{x}}+\bm{\epsilon}-\bm{a}_i\|} \right) \left( \frac{\hat{\bm{x}}+\bm{\epsilon}-\bm{a}_i}{\|\hat{\bm{x}}+\bm{\epsilon}-\bm{a}_i\|} \right)^T \right.\right. \nonumber \\
\noalign{\smallskip}
&\qquad\qquad\qquad\quad - \left.\left. \left( \frac{\bm{x}^\star-\bm{a}_i}{\|\bm{x}^\star-\bm{a}_i\|} \right)\left( \frac{\bm{x}^\star-\bm{a}_i}{\|\bm{x}^\star-\bm{a}_i\|}\right)^T \right) \right) \nonumber \\
\noalign{\smallskip}
&~+ \frac{1}{2} \cdot \lambda_{\min}\left( \sum_{i=1}^m \left( \frac{\bm{x}^\star-\bm{a}_i}{\|\bm{x}^\star-\bm{a}_i\|} \right)\left( \frac{\bm{x}^\star-\bm{a}_i}{\|\bm{x}^\star-\bm{a}_i\|}\right)^T \right) \nonumber \\
\noalign{\smallskip}
&~+ 2\sum_{i=1}^m \frac{\|\hat{\bm{x}}+\bm{\epsilon}-\bm{a}_i\| - r_i}{\|\hat{\bm{x}}+\bm{\epsilon}-\bm{a}_i\|}. \label{eq:minEigenHess}
\end{align}
Now, let us bound the first and last terms in~\eqref{eq:minEigenHess} separately. For the first term, we have
\begin{align}\label{eq:imEigen}
&~\lambda_{\min}\left( \sum_{i=1}^m \left( \left(\frac{\hat{\bm{x}}+\bm{\epsilon}-\bm{a}_i}{\|\hat{\bm{x}}+\bm{\epsilon}-\bm{a}_i\|}\right) \left(\frac{\hat{\bm{x}}+\bm{\epsilon}-\bm{a}_i}{\|\hat{\bm{x}}+\bm{\epsilon}-\bm{a}_i\|}\right)^T \right.\right. \nonumber\\
&\qquad\qquad\quad~ - \left.\left. \left(\frac{\bm{x}^\star-\bm{a}_i}{\|\bm{x}^\star-\bm{a}_i\|}\right)\left(\frac{\bm{x}^\star-\bm{a}_i}{\|\bm{x}^\star-\bm{a}_i\|}\right)^T\right) \right) \nonumber\\
\geq&~ -\frac{1}{2} \sum_{i=1}^m \left\|\frac{\hat{\bm{x}}+\bm{\epsilon}-\bm{a}_i}{\|\hat{\bm{x}}+\bm{\epsilon}-\bm{a}_i\|} - \frac{\bm{x}^\star-\bm{a}_i}{\|\bm{x}^\star-\bm{a}_i\|}\right\|\cdot \nonumber\\
&\qquad\qquad\,\,\, \left\|\frac{\hat{\bm{x}}+\bm{\epsilon}-\bm{a}_i}{\|\hat{\bm{x}}+\bm{\epsilon}-\bm{a}_i\|} + \frac{\bm{x}^\star-\bm{a}_i}{\|\bm{x}^\star-\bm{a}_i\|}\right\| \nonumber\\
\geq&~-\sum_{i=1}^m \left\|\frac{\hat{\bm{x}}+\bm{\epsilon}-\bm{a}_i}{\|\hat{\bm{x}}+\bm{\epsilon}-\bm{a}_i\|} - \frac{\bm{x}^\star-\bm{a}_i}{\|\bm{x}^\star-\bm{a}_i\|}\right\| \nonumber\\
\geq&~ - \sum_{i=1}^m\frac{\|\hat{\bm{x}}+\bm{\epsilon} - \bm{x}^\star\|}{\min\{\|\hat{\bm{x}}+\bm{\epsilon}-\bm{a}_i\|,\|\bm{x}^\star-\bm{a}_i\|\}} \nonumber\\
\geq&~ -\frac{2m}{\delta} \cdot \|\hat{\bm{x}}+\bm{\epsilon} - \bm{x}^\star\|, 
\end{align}
where the first inequality follows from Lemma~\ref{lem:eig-diff-univec}, the second follows by applying triangle inequality to the term $\left\|\frac{\hat{\bm{x}}+\bm{\epsilon}-\bm{a}_i}{\|\hat{\bm{x}}+\bm{\epsilon}-\bm{a}_i\|} + \frac{\bm{x}^\star-\bm{a}_i}{\|\bm{x}^\star-\bm{a}_i\|}\right\|$, the third follows from Lemma~\ref{lem:diff_unitvec}, and the last is due to
\[ \min\{\| \hat{\bm{x}}+\bm{\epsilon}-\bm{a}_i\|,\|\bm{x}^\star-\bm{a}_i\|\} \ge \frac{\delta}{2}, \]
which follows from~\eqref{eq:hat-anchor} and the assumption that $\|\bm{\epsilon}\| \le \kappa < \delta/2$. For the last term in~\eqref{eq:minEigenHess}, we use 
\[ \big| \|\hat{\bm{x}}+\bm{\epsilon}-\bm{a}_i\| - r_i \big| \le \| \hat{\bm x} + \bm{\epsilon} - \bm{x}^\star \| + |w_i|, \]
\eqref{eq:hat-anchor}, and the assumption that $\|\bm{\epsilon}\| \le \kappa < \delta/2$ to bound
\begin{align} 
\sum_{i=1}^m \frac{\|\hat{\bm{x}}+\bm{\epsilon}-\bm{a}_i\| - r_i}{\|\hat{\bm{x}}+\bm{\epsilon}-\bm{a}_i\|} &\ge -\sum_{i=1}^m \frac{|w_i| + \| \hat{\bm x} + \bm{\epsilon} - \bm{x}^\star \|}{ \| \hat{\bm{x}}+\bm{\epsilon}-\bm{a}_i\| } \nonumber \\
&\ge -\frac{2}{\delta} \sum_{i=1}^m \left( |w_i| + \| \hat{\bm x} + \bm{\epsilon} - \bm{x}^\star \| \right). \label{eq:t3}
\end{align}
Upon substituting~\eqref{eq:imEigen} and~\eqref{eq:t3} into~\eqref{eq:minEigenHess} and noting that $\sum_{i=1}^m|w_i| \le \sqrt{m}\|\bm{w}\| \le c_0m\sigma$ by assumption, we obtain
\begin{align}\label{eq:minEigenHess-final}
&~ \lambda_{\min}(\nabla^2f(\hat{\bm{x}}+\bm{\epsilon}))\nonumber\\
\geq&~\frac{1}{2} \cdot \lambda_{\min}\left( \sum_{i=1}^m \left( \frac{\bm{x}^\star-\bm{a}_i}{\|\bm{x}^\star-\bm{a}_i\|} \right)\left( \frac{\bm{x}^\star-\bm{a}_i}{\|\bm{x}^\star-\bm{a}_i\|}\right)^T \right) \nonumber \\
&~- \frac{5m}{\delta} (\| \hat{\bm x} - \bm{x}^\star \| + \|\bm{\epsilon}\|) - \frac{4c_0m\sigma}{\delta}.
\end{align}
Using Proposition~\ref{thm:esterror}, we see that the right-hand side of~\eqref{eq:minEigenHess-final} is positive whenever
\begin{align}
\|\bm{\epsilon}\| &< \frac{\delta}{10m} \cdot \lambda_{\min}\left( \sum_{i=1}^m \left( \frac{\bm{x}^\star-\bm{a}_i}{\|\bm{x}^\star-\bm{a}_i\|} \right)\left( \frac{\bm{x}^\star-\bm{a}_i}{\|\bm{x}^\star-\bm{a}_i\|}\right)^T \right) \nonumber\\
&\quad\,\,\, - (K_1\sqrt{m}\sigma + K_2m\sigma^2) - \frac{4c_0\sigma}{5}. \label{eq:ball-rad}
\end{align}
Since $\sigma$ satisfies~\eqref{eq:eps}, the right-hand side of~\eqref{eq:ball-rad} is positive. This completes the proof.

\bibliographystyle{IEEEtran}
\bibliography{ref}

\begin{thebibliography}{10}
\providecommand{\url}[1]{#1}
\csname url@samestyle\endcsname
\providecommand{\newblock}{\relax}
\providecommand{\bibinfo}[2]{#2}
\providecommand{\BIBentrySTDinterwordspacing}{\spaceskip=0pt\relax}
\providecommand{\BIBentryALTinterwordstretchfactor}{4}
\providecommand{\BIBentryALTinterwordspacing}{\spaceskip=\fontdimen2\font plus
\BIBentryALTinterwordstretchfactor\fontdimen3\font minus
  \fontdimen4\font\relax}
\providecommand{\BIBforeignlanguage}[2]{{%
\expandafter\ifx\csname l@#1\endcsname\relax
\typeout{** WARNING: IEEEtran.bst: No hyphenation pattern has been}%
\typeout{** loaded for the language `#1'. Using the pattern for}%
\typeout{** the default language instead.}%
\else
\language=\csname l@#1\endcsname
\fi
#2}}
\providecommand{\BIBdecl}{\relax}
\BIBdecl

\bibitem{PS20}
Y.-M. Pun and A.~M.-C. So, ``Dynamic regret bound for moving target tracking
  based on online time-of-arrival measurements,'' in \emph{Proceedings of the
  59th IEEE Conference on Decision and Control (CDC 2020)}, 2020, pp.
  5968--5973.

\bibitem{LWH+02}
D.~Li, K.~D. Wong, Y.~H. Hu, and A.~M. Sayeed, ``Detection, classification, and
  tracking of targets,'' \emph{IEEE Signal Processing Magazine}, vol.~19,
  no.~2, pp. 17--29, 2002.

\bibitem{SNP16}
{\'E}.~L. Souza, E.~F. Nakamura, and R.~W. Pazzi, ``Target tracking for sensor
  networks: A survey,'' \emph{ACM Computing Surveys}, vol.~49, no.~2, p.
  Article 30, 2016.

\bibitem{CWC+04}
C.~Chen, H.~Wang, N.~T. Chew, J.~Iba{\~n}ez-Guzm{\'a}n, S.~Jian, and C.~C. Wah,
  ``Target-tracking and path planning for vehicle following in jungle
  environment,'' in \emph{Proceedings of the 8th International Conference on
  Control, Automation, Robotics and Vision (ICARCV 2004)}, vol.~1, 2004, pp.
  455--460.

\bibitem{DSH09}
J.~Derenick, J.~Spletzer, and A.~Hsieh, ``An optimal approach to collaborative
  target tracking with performance guarantees,'' \emph{Journal of Intelligent
  and Robotic Systems}, vol.~56, no. 1--2, pp. 47--67, 2009.

\bibitem{PJ05}
P.~Chakravarty and R.~Jarvis, ``Multiple target tracking for surveillance: A
  particle filter approach,'' in \emph{Proceedings of the 2005 International
  Conference on Intelligent Sensors, Sensor Networks and Information
  Processing}, 2005, pp. 181--186.

\bibitem{PDS14}
B.~Pannetier, J.~Dezert, and G.~Sella, ``Multiple target tracking with wireless
  sensor network for ground battlefield surveillance,'' in \emph{Proceedings of
  the 17th International Conference on Information Fusion (FUSION 2014)}, 2014,
  pp. 1--8.

\bibitem{LEH06}
T.~Li, A.~Ekpenyong, and Y.-F. Huang, ``Source localization and tracking using
  distributed asynchronous sensors,'' \emph{IEEE Transactions on Signal
  Processing}, vol.~54, no.~10, pp. 3991--4003, 2006.

\bibitem{TFLC09}
P.-H. Tseng, K.-T. Feng, Y.-C. Lin, and C.-L. Chen, ``Wireless location
  tracking algorithms for environments with insufficient signal sources,''
  \emph{IEEE Transactions on Mobile Computing}, vol.~8, no.~12, pp. 1676--1689,
  2009.

\bibitem{WLM11}
Y.~Wang, G.~Leus, and X.~Ma, ``Tracking a mobile node by asynchronous
  networks,'' in \emph{Proceedings of the 12th IEEE International Workshop on
  Signal Processing Advances in Wireless Communications (SPAWC 2011)}, 2011,
  pp. 161--165.

\bibitem{YRLS14}
L.~Yi, S.~G. Razul, Z.~Lin, and C.~M. See, ``Target tracking in mixed
  {LOS/NLOS} environments based on individual measurement estimation and {LOS}
  detection,'' \emph{IEEE Transactions on Wireless Communications}, vol.~13,
  no.~1, pp. 99--111, 2014.

\bibitem{DSBM20}
E.~{Dall’Anese}, A.~Simonetto, S.~Becker, and L.~Madden, ``Optimization and
  learning with information streams: Time-varying algorithms and
  applications,'' \emph{IEEE Signal Processing Magazine}, vol.~37, no.~3, pp.
  71--83, 2020.

\bibitem{SDP+20}
A.~Simonetto, E.~{Dall'Anese}, S.~Paternain, G.~Leus, and G.~B. Giannakis,
  ``Time-varying convex optimization: Time-structured algorithms and
  applications,'' \emph{Proceedings of the IEEE}, vol. 108, no.~11, pp.
  2032--2048, 2020.

\bibitem{Zink03}
M.~Zinkevich, ``Online convex programming and generalized infinitesimal
  gradient ascent,'' in \emph{Proceedings of the 20th International Conference
  on International Conference on Machine Learning (ICML 2003)}, 2003, pp.
  928--935.

\bibitem{HW15}
E.~C. Hall and R.~M. Willett, ``Online convex optimization in dynamic
  environments,'' \emph{IEEE Journal of Selected Topics in Signal Processing},
  vol.~9, no.~4, pp. 647--662, 2015.

\bibitem{JRSS15}
A.~Jadbabaie, A.~Rakhlin, S.~Shahrampour, and K.~Sridharan, ``Online
  optimization : Competing with dynamic comparators,'' in \emph{Proceedings of
  the 18th International Conference on Artificial Intelligence and Statistics
  (AISTATS 2015)}, 2015, pp. 398--406.

\bibitem{MSJ+16}
A.~Mokhtari, S.~Shahrampour, A.~Jadbabaie, and A.~Ribeiro, ``Online
  optimization in dynamic environments: Improved regret rates for strongly
  convex problems,'' in \emph{Proceedings of the 55th IEEE Conference on
  Decision and Control (CDC 2016)}, 2016, pp. 7195--7201.

\bibitem{SMK+16}
A.~Simonetto, A.~Mokhtari, A.~Koppel, G.~Leus, and A.~Ribeiro, ``A class of
  prediction-correction methods for time-varying convex optimization,''
  \emph{IEEE Transactions on Signal Processing}, vol.~64, no.~17, pp.
  4576--4591, 2016.

\bibitem{SJ17}
S.~Shahrampour and A.~Jadbabaie, ``An online optimization approach for
  multi-agent tracking of dynamic parameters in the presence of adversarial
  noise,'' in \emph{Proceedings of the 2017 American Control Conference (ACC)},
  2017, pp. 3306--3311.

\bibitem{BSR18}
A.~S. Bedi, P.~Sarma, and K.~Rajawat, ``Tracking moving agents via inexact
  online gradient descent algorithm,'' \emph{IEEE Journal of Selected Topics in
  Signal Processing}, vol.~12, no.~1, pp. 202--217, 2018.

\bibitem{ZXY+10}
D.~Zhang, F.~Xia, Z.~Yang, L.~Yao, and W.~Zhao, ``Localization technologies for
  indoor human tracking,'' in \emph{Proceedings of the 5th International
  Conference on Future Information Technology}, 2010, pp. 1--6.

\bibitem{XDD13}
E.~Xu, Z.~Ding, and S.~Dasgupta, ``Target tracking and mobile sensor navigation
  in wireless sensor networks,'' \emph{IEEE Transactions on Mobile Computing},
  vol.~12, no.~1, pp. 177--186, 2013.

\bibitem{LTS20}
A.~Lesage-Landry, J.~A. Taylor, and I.~Shames, ``Second-order online nonconvex
  optimization,'' 2020, accepted for publication in \emph{IEEE Transactions on
  Automatic Control}.

\bibitem{CMS04}
K.~W. Cheung, W.-K. Ma, and H.~C. So, ``Accurate approximation algorithm for
  {TOA}-based maximum likelihood mobile location using semidefinite
  programming,'' pp. II--145--II--148, 2004.

\bibitem{SY07}
A.~M.-C. So and Y.~Ye, ``Theory of semidefinite programming for sensor network
  localization,'' \emph{Mathematical Programming, Series B}, vol. 109, no.~2,
  pp. 367--384, 2007.

\bibitem{BSL08}
A.~Beck, P.~Stoica, and J.~Li, ``Exact and approximate solutions of source
  localization problems,'' \emph{IEEE Transactions on Signal Processing},
  vol.~56, no.~5, pp. 1770--1778, 2008.

\bibitem{BTC08}
A.~Beck, M.~Teboulle, and Z.~Chikishev, ``Iterative minimization schemes for
  solving the single source localization problem,'' \emph{SIAM Journal on
  Optimization}, vol.~19, no.~3, pp. 1397--1416, 2008.

\bibitem{XDD11a}
E.~Xu, Z.~Ding, and S.~Dasgupta, ``Source localization in wireless sensor
  networks from signal time-of-arrival measurements,'' \emph{IEEE Transactions
  on Signal Processing}, vol.~59, no.~6, pp. 2887--2897, 2011.

\bibitem{JSZ+13}
S.~Ji, K.-F. Sze, Z.~Zhou, A.~M.-C. So, and Y.~Ye, ``Beyond convex relaxation:
  A polynomial-time non-convex optimization approach to network localization,''
  in \emph{Proceedings of the 32nd IEEE International Conference on Computer
  Communications (INFOCOM 2013)}, 2013, pp. 2499--2507.

\bibitem{So19}
H.~C. So, ``Source localization: Algorithms and analysis,'' in \emph{Handbook
  of Position Location: Theory, Practice, and Advances}, 2nd~ed., S.~A. {(Reza)
  Zekavat} and R.~M. Buehrer, Eds.\hskip 1em plus 0.5em minus 0.4em\relax New
  Jersey: John Wiley \& Sons, Inc., 2019, pp. 59--106.

\bibitem{HMMR20}
A.~H{\'e}liou, M.~Martin, P.~Mertikopoulos, and T.~Rahier, ``Online non-convex
  optimization with imperfect feedback,'' in \emph{Advances in Neural
  Information Processing Systems 33: Proceedings of the 2020 Conference,
  \emph{to appear}}, 2020.

\bibitem{LPS17}
H.~Liu, Y.-M. Pun, and A.~M.-C. So, ``Local strong convexity of
  maximum-likelihood {TDOA}-based source localization and its algorithmic
  implications,'' in \emph{2017 IEEE 7th International Workshop on
  Computational Advances in Multi-Sensor Adaptive Processing (CAMSAP)}.\hskip
  1em plus 0.5em minus 0.4em\relax IEEE, 2017, pp. 1--5.

\bibitem{WSL16}
G.~Wang, A.~M.-C. So, and Y.~Li, ``Robust convex approximation methods for
  {TDOA}-based localization under {NLOS} conditions,'' \emph{IEEE Transactions
  on Signal Processing}, vol.~64, no.~13, pp. 3281--3296, 2016.

\bibitem{Vial82}
J.-P. Vial, ``Strong convexity of sets and functions,'' \emph{Journal of
  Mathematical Economics}, vol.~9, no. 1--2, pp. 187--205, 1982.

\bibitem{Ver18}
R.~Vershynin, \emph{High-Dimensional Probability: An Introduction with
  Applications in Data Science}, ser. Cambridge Series in Statistical and
  Probabilistic Mathematics.\hskip 1em plus 0.5em minus 0.4em\relax Cambridge:
  Cambridge University Press, 2018, vol.~47.

\bibitem{N04}
{\relax Yu}.~Nesterov, \emph{{Introductory Lectures on Convex Optimization: A
  Basic Course}}.\hskip 1em plus 0.5em minus 0.4em\relax Boston: Kluwer
  Academic Publishers, 2004.

\bibitem{STK05}
A.~H. Sayed, A.~Tarighat, and N.~Khajehnouri, ``Network-based wireless
  location: Challenges faced in developing techniques for accurate wireless
  location information,'' \emph{IEEE Signal Processing Magazine}, vol.~22,
  no.~4, pp. 24--40, 2005.

\bibitem{LCS09}
K.~W.~K. Lui, F.~K.~W. Chan, and H.~C. So, ``Semidefinite programming approach
  for range-difference based source localization,'' \emph{IEEE Transactions on
  Signal Processing}, vol.~57, no.~4, pp. 1630--1633, 2009.

\end{thebibliography}

\begin{IEEEbiography}[{\includegraphics[width=1in,height=1.25in,clip,keepaspectratio]{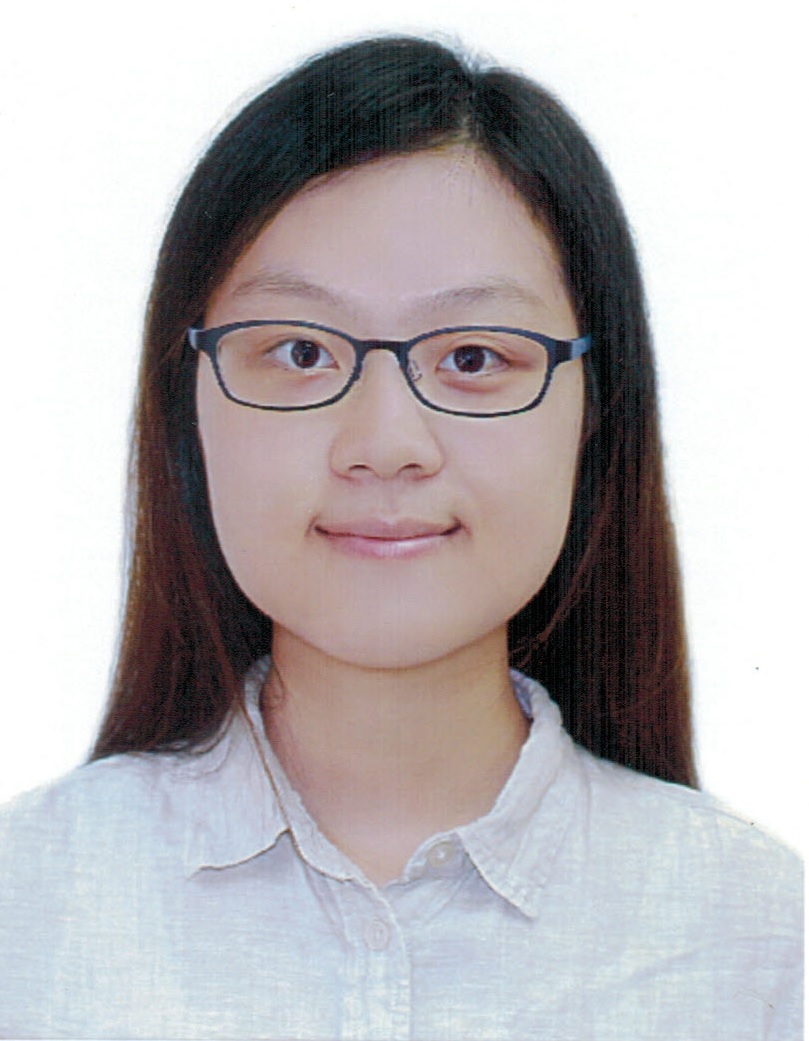}}]
{Yuen-Man Pun} received the BSc degree in Mathematics and MPhil degree in Systems Engineering and Engineering Management (SEEM), both from The Chinese University of Hong Kong (CUHK). She is currently pursuing a PhD degree in SEEM at CUHK. Her research focuses on algorithmic design and analysis and its applications in data science and signal processing.
\end{IEEEbiography}

\begin{IEEEbiography}[{\includegraphics[width=1in,height=1.25in,clip,keepaspectratio]{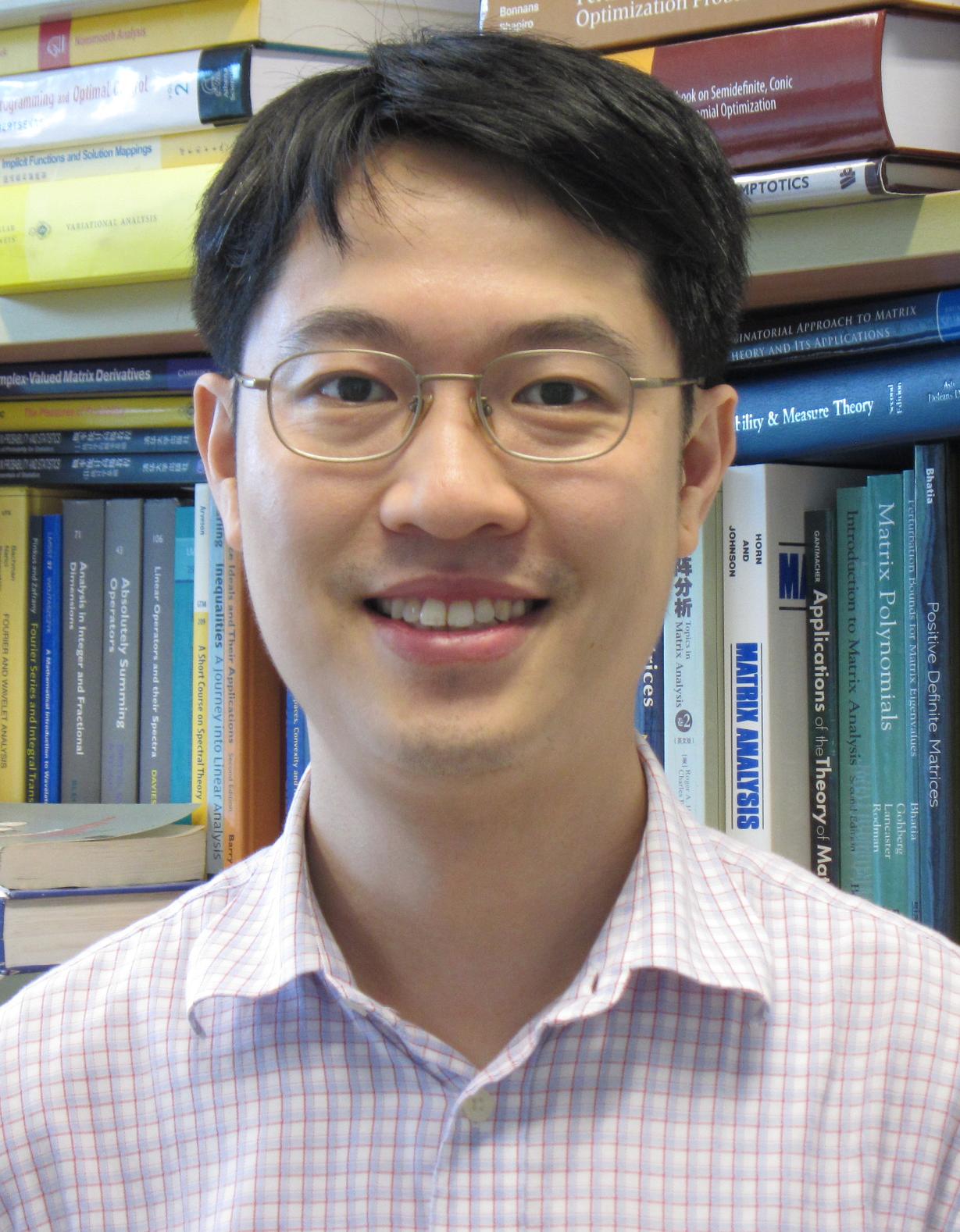}}]
{Anthony Man-Cho So} (M'12-SM'17) received the BSE degree in Computer Science from Princeton University, Princeton, NJ, USA, with minors in Applied and Computational Mathematics, Engineering and Management Systems, and German Language and Culture. He then received the M.Sc. degree in Computer Science and the Ph.D. degree in Computer Science with a Ph.D. minor in Mathematics from Stanford University, Stanford, CA, USA.
		
	Dr. So joined The Chinese University of Hong Kong (CUHK) in 2007. He is now the Associate Dean of Student Affairs in the Faculty of Engineering, Deputy Master of Morningside College, and Professor in the Department of Systems Engineering and Engineering Management. His research focuses on optimization theory and its applications in various areas of science and engineering, including computational geometry, machine learning, signal processing, and statistics.
	
	Dr. So is appointed as an Outstanding Fellow of the Faculty of Engineering at CUHK in 2019. He has received a number of research and teaching awards, including the 2018 IEEE Signal Processing Society Best Paper Award, the 2015 IEEE Signal Processing Society Signal Processing Magazine Best Paper Award, the 2014 IEEE Communications Society Asia-Pacific Outstanding Paper Award, the 2013 CUHK Vice-Chancellor's Exemplary Teaching Award, and the 2010 Institute for Operations Research and the Management Sciences (INFORMS) Optimization Society Optimization Prize for Young Researchers. He currently serves on the editorial boards of Journal of Global Optimization, Optimization Methods and Software, and SIAM Journal on Optimization. He was also the Lead Guest Editor of {\sc IEEE Signal Processing Magazine} Special Issue on ``Non-Convex Optimization for Signal Processing and Machine Learning''.
\end{IEEEbiography}

%
\addtolength{\textheight}{-3cm}   
\end{document}